\newtheorem*{theorem*}{Theorem}
\newtheorem*{proposition*}{Proposition}
\tikzset{curve/.style={settings={#1},to path={(\tikztostart)
    .. controls ($(\tikztostart)!\pv{pos}!(\tikztotarget)!\pv{height}!270:(\tikztotarget)$)
    and ($(\tikztostart)!1-\pv{pos}!(\tikztotarget)!\pv{height}!270:(\tikztotarget)$)
    .. (\tikztotarget)\tikztonodes}},
    settings/.code={\tikzset{quiver/.cd,#1}
        \def\pv##1{\pgfkeysvalueof{/tikz/quiver/##1}}},
    quiver/.cd,pos/.initial=0.35,height/.initial=0}
\tikzset{tail reversed/.code={\pgfsetarrowsstart{tikzcd to}}}
\tikzset{2tail/.code={\pgfsetarrowsstart{Implies[reversed]}}}
\tikzset{2tail reversed/.code={\pgfsetarrowsstart{Implies}}}
\tikzset{no body/.style={/tikz/dash pattern=on 0 off 1mm}}
\DeclareMathOperator{\im}{im}
\DeclareMathOperator{\field}{k}
\DeclareMathOperator{\op}{op}
\DeclareMathOperator{\rad}{rad}
\DeclareMathOperator{\modu}{mod}
\DeclareMathOperator{\Ext}{Ext}
\DeclareMathOperator{\End}{End}
\DeclareMathOperator{\Hom}{Hom}
\DeclareMathOperator{\Sim}{Sim}
\DeclareMathOperator{\id}{id}
\DeclareMathOperator{\filt}{F}
\newtheorem{theorem}{Theorem}[section]
\newtheorem{definition}[theorem]{Definition}
\newtheorem{example}[theorem]{Example}
\newtheorem{corollary}[theorem]{Corollary}
\newtheorem{lemma}[theorem]{Lemma}
\newtheorem{remark}[theorem]{Remark}
\newtheorem{proposition}[theorem]{Proposition}
\title{Exact Borel Subalgebras of Tensor Algebras of Quasi-Hereditary Algebras}
\author{Anna Rodriguez Rasmussen}
\begin{document}
\bibliographystyle{plain}
\begin{abstract}
Given two quasi-hereditary algebras, their tensor product is quasi-hereditary. In this article, we show that given two exact Borel subalgebras for these quasi-hereditary algebras, their tensor product is an exact Borel subalgebra. Moreover, we describe in which cases the tensor product of two regular exact Borel subalgebras is again regular. Additionally, we investigate tensor algebras of generalised species of quasi-hereditary algebras and exact Borel subalgebras thereof.
\end{abstract}
\maketitle
\tableofcontents
\section{Introduction}

Quasi-hereditary algebras, first defined by Cline, Parshall and Scott \cite{CPS}, appear in many different areas of representation theory. Some examples include Schur algebras, algebras of global dimension at most two and algebras underlying blocks of category $\mathcal{O}$. Additionally, for non-quasi-hereditary algebras, it is sometimes possible to associate quasi-hereditary covers, a concept introduced by Rouquier \cite{Rouquier}.\\ 
In \cite{Koenig}, König defined the concept of an exact Borel subalgebra of a quasi-hereditary algebra. An exact Borel subalgebra $B$ of a quasi-hereditary algebra $A$ is a subalgebra $B\subseteq A$ capturing the homological information of the category $\filt(\Delta)$ of standardly filtered modules.
As such, an exact Borel subalgebra $B$ of $A$ is a helpful tool to study the quasi-hereditary structure of $A$. Of particular interest among all exact Borel subalgebras are so-called regular exact Borel subalgebras, which exhibit desirable homological properties, and for which both existence and uniqueness results are known \cite{KKO, Miemietz, uniqueness}.\\
In the past, it has been established that a quasi-hereditary structure is compatible with many constructions on algebras. For example, if $A$ and $A'$ are quasi-hereditary then, by \cite{Chan}, so is their tensor product $A\otimes A'$. Similarly, if $G$ is a finite group acting on a quasi-hereditary algebra $A$,  such that the characteristic of the underlying field does not divide the order of $G$, then, under a suitable compatibility condition, the skew group algebra $A*G$ is quasi-hereditary. In particular, wreath products of quasi-hereditary algebras, which appear in work by Chuang and Kessar on Broué's Abelian Defect Group Conjecture \cite{chuangkessar}, are quasi-hereditary.\\ Moreover, it was shown in \cite{triangular_matrix} that if $A$ and $A'$ are quasi-hereditary and $M$ is a left-standardly filtered $A$-$A'$-bimodule, then the associated triangular matrix ring $\begin{pmatrix}
    A & M \\ 0  & A'
\end{pmatrix}$ is quasi-hereditary.\\
Given these results, it is a natural question how and under which conditions one may construct an exact Borel subalgebra of the new algebra based on exact Borel subalgebras of the original algebras, and in which cases regularity is preserved.
In past work, the authors have investigated the case of skew group algebras \cite{skewgroup}.
The aim of the present article is to consider the same question for two other constructions of this kind. In the first half of the article, we consider tensor products of quasi-hereditary algebras, and show the following result:
\begin{theorem*}\ref{thm_tensor} \ref{proposition_regular}
 Let $A, A'$ be quasi-hereditary algebras with exact Borel subalgebras $B, B'$. Then $B\otimes B'$ is an exact Borel subalgebra of $A\otimes A'$.\\
 Moreover, suppose that $B$ and $B'$ are regular. Then the following are equivalent: 
		\begin{enumerate}
			\item $B\otimes B'$ is regular.
            \item One of the following statements holds:
            \begin{enumerate}
			\item $A$ and $A'$ are directed.
			\item $A^{\op}$ and $(A')^{\op}$ are directed.
			\item $A$ is semisimple.
            \item $A'$ is semisimple.
            \end{enumerate}
		\end{enumerate}
\end{theorem*}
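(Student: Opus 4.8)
The plan is to treat the two assertions separately: the first by checking König's axioms factor by factor, and the second by a Künneth computation followed by a homological dictionary. Throughout I work over the field $\field$ and use that it is a splitting field, so that the simple $A\otimes A'$-modules are exactly the $L_A(i)\otimes L_{A'}(i')$ and the standard modules are $\Delta_{A\otimes A'}(i,i')=\Delta_A(i)\otimes\Delta_{A'}(i')$, as in the quasi-hereditary structure on a tensor product \cite{Chan}.

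\emph{Exact Borel subalgebra.} First I would verify the defining conditions of an exact Borel subalgebra of $A\otimes A'$ for $B\otimes B'$. Since $B$ and $B'$ are directed, their standard modules are simple, so the standard modules of $B\otimes B'$ are $\Delta_B(i)\otimes\Delta_{B'}(i')=L_B(i)\otimes L_{B'}(i')$, which are simple; hence $B\otimes B'$ is directed. Because $A$ is projective as a right $B$-module and $A'$ as a right $B'$-module, $A\otimes A'$ is a direct summand of a free right $B\otimes B'$-module, hence projective, so induction $(A\otimes A')\otimes_{B\otimes B'}-$ is exact. Finally, using the natural isomorphism $(A\otimes A')\otimes_{B\otimes B'}(X\otimes X')\cong(A\otimes_B X)\otimes(A'\otimes_{B'}X')$ together with $A\otimes_B L_B(i)\cong\Delta_A(i)$ and $A'\otimes_{B'}L_{B'}(i')\cong\Delta_{A'}(i')$, induction sends $L_B(i)\otimes L_{B'}(i')$ to $\Delta_A(i)\otimes\Delta_{A'}(i')=\Delta_{A\otimes A'}(i,i')$. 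This gives the first claim.

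\emph{Reduction of regularity to the factors.} For the equivalence I would analyse the map on $\Ext$ induced by induction via the Künneth formula $\Ext^n_{A\otimes A'}(M\otimes M',N\otimes N')\cong\bigoplus_{p+q=n}\Ext^p_A(M,N)\otimes\Ext^q_{A'}(M',N')$, and the analogous formula for $B\otimes B'$. Writing $\Phi^\bullet_A\colon\Ext^\bullet_B(L_B,L_B)\to\Ext^\bullet_A(\Delta_A,\Delta_A)$ and $\Phi^\bullet_{A'}$ for the induction maps of the two factors, the induced map on $\Ext^n$ for $B\otimes B'\subseteq A\otimes A'$ is, under these isomorphisms, block-diagonal and equal to $\bigoplus_{p+q=n}\Phi^p_A\otimes\Phi^q_{A'}$. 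As a direct sum of maps is an isomorphism exactly when each summand is, $B\otimes B'$ is regular if and only if $\Phi^p_A\otimes\Phi^q_{A'}$ is an isomorphism for every $(p,q)\neq(0,0)$ and all indices. Regularity of $B$ and $B'$ makes $\Phi^p_A,\Phi^q_{A'}$ isomorphisms for $p,q\geq1$, so the terms with both indices positive impose no condition, and only the mixed terms survive. Since $\Phi^k_A$ is an isomorphism, $\Phi^k_A\otimes\Phi^0_{A'}$ is an isomorphism for all $k\geq1$ (and all indices) if and only if $\Ext^{\geq1}_B(L_B,L_B)=0$, i.e.\ $B$ is semisimple, or $\Phi^0_{A'}$ is an isomorphism; symmetrically for $\Phi^0_A\otimes\Phi^k_{A'}$.

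\emph{Dictionary and conclusion.} It then remains to translate the two surviving conditions. I would show $\Phi^0_A\colon\Hom_B(L_B(i),L_B(j))\to\Hom_A(\Delta_A(i),\Delta_A(j))$ is an isomorphism for all $i,j$ if and only if $A$ is directed: if some $\Delta_A(j)$ is non-simple it has a socle constituent $L_A(i)$ with $i\neq j$, giving a nonzero composite $\Delta_A(i)\twoheadrightarrow L_A(i)\hookrightarrow\Delta_A(j)$, so $\Hom_A(\Delta_A(i),\Delta_A(j))\neq0=\Hom_B(L_B(i),L_B(j))$. For semisimplicity, regularity gives $\Ext^{\geq1}_B(L_B,L_B)\cong\Ext^{\geq1}_A(\Delta_A,\Delta_A)$, so $B$ is semisimple iff $\Ext^{\geq1}_A(\Delta_A(i),\Delta_A(j))=0$ for all $i,j$; I would show this forces the kernel of each $P(i)\twoheadrightarrow\Delta_A(i)$ to vanish (a nonzero kernel, being $\Delta$-filtered with higher sections, produces a nonzero self-extension of standards), hence $\Delta_A(i)=P(i)$, which by BGG reciprocity is equivalent to $\nabla_A(i)=L_A(i)$ for all $i$, i.e.\ to $A^{\op}$ being directed. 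Substituting, $B\otimes B'$ is regular iff $\bigl([A^{\op}\text{ directed}]\text{ or }[A'\text{ directed}]\bigr)$ and $\bigl([(A')^{\op}\text{ directed}]\text{ or }[A\text{ directed}]\bigr)$. Expanding this conjunction into its four cases and using that ``$A$ and $A^{\op}$ directed'' is equivalent to ``$A$ semisimple'' (since then $\Ext^1_A(L,L)=\Ext^1_A(\Delta,\nabla)=0$ by standard--costandard orthogonality) yields precisely (a)--(d).

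I expect the main obstacle to be the naturality statement used in the second paragraph: one must verify that the induction map on $\Ext$ is genuinely block-diagonal with respect to the Künneth decomposition and acts as $\Phi^p_A\otimes\Phi^q_{A'}$ on each summand. This is best handled by resolving $L_B(i)\otimes L_{B'}(i')$ over $B\otimes B'$ by the tensor product of chosen projective resolutions over $B$ and over $B'$, applying induction factorwise, and tracking the comparison maps; the remaining homological lemmas, requiring only BGG reciprocity and standard--costandard orthogonality, are comparatively routine.
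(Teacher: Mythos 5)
Your proposal is correct and follows essentially the same route as the paper: verify the Borel axioms factorwise, then use the K\"unneth decomposition to reduce regularity of $B\otimes B'$ to the mixed terms $\Phi^p_A\otimes\Phi^0_{A'}$ and $\Phi^0_A\otimes\Phi^q_{A'}$, and translate the degree-zero condition into directedness and the vanishing of $\Ext^{\geq 1}(\Delta,\Delta)$ into projectivity of the standard modules, i.e.\ directedness of the opposite algebra. The only cosmetic difference is that you justify ``$\Delta$ projective iff $A^{\op}$ directed'' via BGG reciprocity and the socle argument for $\Phi^0$, where the paper states these identifications more briefly; the naturality of the block decomposition, which you rightly flag as the key point to check, is handled in the paper exactly as you propose, by tensoring chosen projective resolutions and checking the resulting commuting square of Ext-algebras.
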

In the second half of the article, we consider tensor algebras of species of quasi-hereditary algebras.
Species are a generalization of path algebras of quivers; in our generalized setting, they consist simply of an underlying quiver, an algebra at every vertex and a bimodule at every arrow. However, the original versions of species are usually more restrictive, in particular, the algebras at the vertices are often assumed to be division rings. In this form, species appear in the classification of hereditary algebras over perfect fields in \cite{Gabriel}, and fulfill a version of Gabriel's theorem \cite{DR3}. However, more general versions of species are also commonly studied \cite{julian_basic, coelholiu, Lemay}. Species are related to tensor products of algebras in that the tensor product of an algebra $A$ with a path algebra can be viewed as a tensor algebra over the species corresponding to the quiver $Q$ equipped with the algebra $A$ at every vertex and the $A$-$A$-bimodule $A$ at every arrow. Moreover, they generalize triangular matrix rings in that a triangular matrix ring is nothing but a species on the $\textup{A}_2$-quiver.\\
Here, we first establish a criterion for quasi-heredity, Theorem \ref{thm_qh_species}, and thereafter give a construction for an exact Borel subalgebra in Theorem \ref{thm_borel_species}. The assumptions as well as the construction itself are rather technical. However, both simplify significantly in the case of triangular matrix rings: 
 \begin{theorem*}\ref{proposition_triangular_borel} (see also \ref{thm_borel_species})
    Suppose $A_1$ and $A_2$ are quasi-hereditary algebras and $M$ is a left standardly filtered $A_2$-$A_1$-bimodule. Let $A$ be the triangular matrix ring 
\begin{align*}
    \begin{pmatrix}
        A_2 & M\\
        0 & A_1
    \end{pmatrix}
\end{align*}
    Suppose that $B_1$ and $B_2$ are exact Borel subalgebras of $A_1$ resp. $A_2$ with embeddings $\iota_1$ resp. $\iota_2$, and $M\cong A_2\otimes_{B_2} N$ as an $A_2$-$B_1$-bimodule for some $N_2$-$N_1$-bimodule $B$. Then the triangular matrix ring $B$ given by
    \begin{align*}
    \begin{pmatrix}
        B_2 & N\\
        0 & B_1
    \end{pmatrix}
\end{align*}
is an exact Borel subalgebra of $A$ via the embedding
     \begin{align*}
           \begin{pmatrix}
        \iota_2 & f_N\\
        0 & \iota_1
    \end{pmatrix}: B\rightarrow A.
     \end{align*}
 \end{theorem*}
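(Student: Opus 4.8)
The plan is to verify directly that $\iota=\begin{pmatrix}\iota_2 & f_N\\0 & \iota_1\end{pmatrix}$ realises $B$ as an exact Borel subalgebra of $A$, i.e.\ that $B$ is directed with the same simples as $A$, that $A$ is projective as a right $B$-module (so that $A\otimes_B-$ is exact), and that $A\otimes_B L_B(i)\cong\Delta_A(i)$ for every simple $B$-module $L_B(i)$; this is precisely the specialisation of Theorem \ref{thm_borel_species} to the species on the $\mathrm{A}_2$-quiver, so it may alternatively be deduced from there. Throughout I would use the description of a left module over $A=\begin{pmatrix}A_2 & M\\0 & A_1\end{pmatrix}$ as a triple $(X_2,X_1,g\colon M\otimes_{A_1}X_1\to X_2)$, and the analogous description over $B$. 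Ordering $\Lambda_A=\Lambda_{A_1}\sqcup\Lambda_{A_2}$ so that all of $\Lambda_{A_1}$ lies below all of $\Lambda_{A_2}$, the quasi-heredity of $A$ from \cite{triangular_matrix} gives $\Delta_A(\lambda)=(0,\Delta_{A_1}(\lambda),0)$ for $\lambda\in\Lambda_{A_1}$, $\Delta_A(\mu)=(\Delta_{A_2}(\mu),0,0)$ for $\mu\in\Lambda_{A_2}$, and $P_A(\lambda)=(M\otimes_{A_1}P_{A_1}(\lambda),P_{A_1}(\lambda),\mathrm{id})$, $P_A(\mu)=(P_{A_2}(\mu),0,0)$.

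First I would check that $B$ is directed with matching simples. Since $B_1$ and $B_2$ are directed, every standard $B_i$-module is simple, so the finite-dimensional bimodule $N$ is automatically left standardly filtered over $B_2$; hence \cite{triangular_matrix} applies and $B$ is quasi-hereditary, with $\Delta_B(\lambda)=(0,L_{B_1}(\lambda),0)$ and $\Delta_B(\mu)=(L_{B_2}(\mu),0,0)$. As all standard $B$-modules are simple, $B$ is directed. Because $\iota$ preserves the vertex idempotents and restricts to the exact Borel embeddings $\iota_1,\iota_2$, it induces a bijection $\Lambda_B\leftrightarrow\Lambda_A$ on simples and satisfies $\rad B=\iota^{-1}(\rad A)$, using $\rad A=\begin{pmatrix}\rad A_2 & M\\0 & \rad A_1\end{pmatrix}$ and the corresponding identities for $\iota_1,\iota_2$.

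The main step, and the place where the hypothesis $M\cong A_2\otimes_{B_2}N$ is indispensable, is exactness of $A\otimes_B-$, i.e.\ projectivity of $A$ as a right $B$-module. I would decompose $A=e_2^A A\oplus e_1^A A$ as right $B$-modules. The summand $e_1^A A$ is concentrated at vertex $1$ and equals $A_1$ as a right $B_1$-module, which is projective because $B_1$ is an exact Borel subalgebra of $A_1$; placing a projective right $B_1$-module at vertex $1$ yields a projective right $B$-module. The summand $e_2^A A$ is, as a right $B$-module, the triple $(A_2,M,\phi)$ whose structure map $\phi\colon A_2\otimes_{B_2}N\to M$ is exactly the identification $M\cong A_2\otimes_{B_2}N$; since $\phi$ is an isomorphism, this module is isomorphic to $(A_2,A_2\otimes_{B_2}N,\mathrm{id})$, which is induced from the right $B_2$-module $A_2$ along the functor $W\mapsto(W,W\otimes_{B_2}N,\mathrm{id})$. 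That functor sends projectives to projectives, and $A_2$ is right $B_2$-projective, so $e_2^A A$ is right $B$-projective. Hence $A$ is right $B$-projective and $A\otimes_B-$ is exact. I expect this verification — recognising $e_2^A A$ as an induced, hence projective, right $B$-module precisely when $\phi$ is invertible — to be the crux; without $M\cong A_2\otimes_{B_2}N$ the structure map need not be an isomorphism and projectivity can fail.

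Finally I would compute induction on simples. From the projective description one gets $A\otimes_B(X_2,0,0)\cong(A_2\otimes_{B_2}X_2,0,0)$ for every $B_2$-module $X_2$, as both sides are right exact and agree on projectives. For a $B_1$-module $X_1$ I would apply the exact functor $A\otimes_B-$ to the natural short exact sequence
\[
0\to(N\otimes_{B_1}X_1,0,0)\to(N\otimes_{B_1}X_1,X_1,\mathrm{id})\to(0,X_1,0)\to 0,
\]
evaluating the first term by the vertex-$2$ formula together with $A_2\otimes_{B_2}N\cong M$, and the middle term via $A\otimes_B P_B(\lambda)=P_A(\lambda)$ extended by right exactness; comparing with the analogous sequence over $A$ then gives $A\otimes_B(0,X_1,0)\cong(0,A_1\otimes_{B_1}X_1,0)$. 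Specialising to simples and using that $\iota_1,\iota_2$ are exact Borel embeddings, so that $A_2\otimes_{B_2}L_{B_2}(\mu)\cong\Delta_{A_2}(\mu)$ and $A_1\otimes_{B_1}L_{B_1}(\lambda)\cong\Delta_{A_1}(\lambda)$, yields $A\otimes_B L_B(\mu)\cong\Delta_A(\mu)$ and $A\otimes_B L_B(\lambda)\cong\Delta_A(\lambda)$ for all labels. Together with directedness and the matching of simples, this shows that $B$ is an exact Borel subalgebra of $A$.
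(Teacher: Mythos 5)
Your proposal is correct and follows essentially the same route as the paper: you decompose $A$ as a right $B$-module into its two rows and identify $e_2A$ with the module induced from the right $B_2$-module $A_2$ via the hypothesis $M\cong A_2\otimes_{B_2}N$ (the paper writes this as $A\cong A_1\otimes_{B_1}1_{B_1}B\oplus A_2\otimes_{B_2}1_{B_2}B$), which is exactly the crux. Your explicit verification of directedness of $B$ and of $A\otimes_B L_B(\lambda)\cong\Delta_A(\lambda)$ via the triple description just spells out what the paper delegates to the argument of Theorem \ref{thm_borel_species}.
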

The structure of the article is a follows:\\
In Section 2, we establish our notation and give a short introduction to quasi-hereditary algebras.\\
In Section 3, we investigate tensor products of quasi-hereditary algebras. We establish in Theorem \ref{thm_tensor} that tensor products of exact Borel subalgebras are exact Borel subalgebras. Moreover, we investigate regularity of tensor products of regular exact Borel subalgebras, see Proposition \ref{proposition_regular}.\\
In Section 4, we consider tensor algebras of species of quasi-hereditary algebras. We begin by giving a short introduction to species, and establishing a sufficient criterion for quasi-heredity, which is based on the criterion in \cite{triangular_matrix}.
Afterwards, we give a construction for an exact Borel subalgebra of the species based on a collection of exact Borel subalgebras of the underlying collection of algebras, under some technical conditions on the bimodules. We give some examples where this construction simplifies, including the case of triangular matrix rings.
Finally, we conclude with some examples to illustrate the question of regularity.
\section{Preliminaries}
Throughout, let $\field$ be an algebraically closed field. All $\field$-vector spaces we consider are finite-dimensional, and all tensor products, unless otherwise stated, are over $\field$. For any subset $S$ of a vector space $V$ we denote by $\langle S\rangle $ the $\field$-span of $S$.\\
All algebras we consider are finite-dimensional associative unital $\field$-algebras, and all modules are finitely generated left modules. For any finite-dimensional algebra we denote by $\Sim(A)$ a set of representatives of isomorphism classes of simple $A$-modules. Moreover, if $e$ and $f$ are primitive orthogonal idempotents of $A$, we call $e$ and $f$ equivalent if the associated simple modules $Ae/\rad(A)e$ and $Af/\rad(A)f$ are isomorphic.\\
Quasi-hereditary algebras were first defined by Cline, Parshall and Scott:
\begin{definition}\cite[p.92]{CPS}(Quasi-hereditary, ideal definition)
    Let $A$ be a finite-dimensional algebra. An ideal $J$ is called a \textbf{heredity ideal} in $A$ if 
    \begin{itemize}
        \item $J^2=J$,
        \item $J\rad(A)J=(0)$
        \item $J$ is projective as a right $A$-module.
    \end{itemize}
    The algebra $A$ is called quasi-hereditary if there is a heredity ideal $J$ in $A$ and $A=J$ or $A/J$ is quasi-hereditary.
\end{definition}
In other words, $A$ is quasi-hereditary iff there is a chain of ideals
\begin{align*}
    (0)\subset J_n\subset \dots \subset J_1=A
\end{align*}
in $A$, such that $J_k/J_{k+1}$ is a heredity ideal in $A/J_{k+1}$ for all $1\leq k<n$. 
In \cite[Lemma 3.4]{CPS}, it was also shown that if $A$ is quasi-hereditary, there is a set of orthogonal idempotents $e_1, \dots, e_n$ such that $J_k= A\sum_{i=k}^n e_i A$ for all $1\leq k\leq n$, $\sum_{k=1}^n e_i=1$ and $e_iAe_j\subseteq\rad(A)$ for $i\neq j$. Decomposing each $e_i$ further into a sum of primitive orthogonal idempotents $e_i=\sum_{j=1}^{m_i}e_{ij}$, we can obtain a pre-order $\lesssim_A$ on a complete set of primitive orthogonal idempotents for $A$ by setting $e_{ij}<_A e_{kl}:\Leftrightarrow i< k$ and $e_{ij}\sim_A e_{kl}$ if and only if  $e_{ij}$ is equivalent to $e_{kl}$. Here, it is important to note that $e_{ij}$ being equivalent to $e_{kl}$ implies that $i=k$.\\
Since primitive idempotents give rise to simple $A$-modules, we have for any complete set $S=\{f_1, \dots, f_N\}$ of primitive orthogonal idempotents of $A$ a bijection
\begin{align*}
    S/\textup{equivalence}\rightarrow \Sim(A), [f_i]\mapsto [Af_i/\rad(A)f_i].
\end{align*}
In particular, pre-orders $\lesssim$ on $S$ such that $f_i\sim f_j$ if and only if $f_i$ and $f_j$ are equivalent are in one-to-one correspondence with partial orders $\leq$ on $\Sim(A).$
Hence the pre-order $\lesssim_A$ on $\{e_{ij}|1\leq i\leq n, 1\leq j\leq m_i\}$ induces a partial order $\leq_A$ on $\Sim(A)$.
Using this partial order, it is possible to reconstruct the chain of ideals. Hence quasi-heredity of $A$ with respect to the chain of ideals
\begin{align*}
    (0)\subset J_n\subset \dots \subset J_1=A
\end{align*}
can be reformulated into a criterion on $(A, \leq_A)$:
\begin{definition}\label{definition_qh}\cite[p. 2]{DlabRingel}
    Let $A$ be a finite-dimensional algebra and $\leq_A$ be a partial order on $\Sim(A)$. Then for every $L\in \Sim(A)$ we define
    \begin{align*}
        \Delta(L):=P(L)/\sum_{L'\nleq_A L, f\in \Hom_A(P_{L'}, P_L)}\im(f).
    \end{align*}
    $\Delta(L)$ is called the standard module associated to $L$ with respect to the partial order $\leq_A$.
    Dually, we define $\nabla(L)$ as the biggest submodule of $I(L)$ such that all composition factors $L'$ of $\nabla(L)$ fulfill $L'\leq_A L$.
    We denote by $\Delta:=(\Delta(L))_{L\in \Sim(A)}$ the collection of standard modules and by $\filt(\Delta)$ the full subcategory of $\modu A$ consisting of all modules which admit a filtration by $\{\Delta(L)|L\in \Sim(A)\}$.\\
    Dually, we denote by $\nabla:=(\nabla(L))_{L\in \Sim(A)}$ the collection of costandard modules and by $\filt(\nabla)$ the full subcategory of $\modu A$ consisting of all modules which admit a filtration by $\{\nabla(L)|L\in \Sim(A)\}$.\\
    Then the following statements are equivalent: 
   \begin{enumerate}
   \item  The pair $(A, \leq_A)$ is quasi-hereditary.
       \item $\End_{\field}(\Delta(L))\cong \field$ for all $L\in \Sim(A)$ and $A\in \filt(\Delta)$.
       \item $\End_{\field}(\nabla(L))\cong \field$ for all $L\in \Sim(A)$ and $A\in \filt(\nabla)$.
   \end{enumerate}
\end{definition} 
Here, the partial order $\leq_A$ on $\Sim(A)$ is considered part of the structure of the quasi-hereditary algebra $A$. Indeed, in general $A$ may admit several different partial orders $\leq_A$ which give $(A, \leq_A)$ the structure of a quasi-hereditary algebra, which can have very different standard modules. Since the standard modules and the category $\filt(\Delta)$ are important objects of study in themselves, we want to view these as different quasi-hereditary algebras.\\
On the other hand, it may also happen that different partial orders give rise to the same standard modules. In this case, they are called equivalent partial orders.\\
A particularly easy case of a quasi-hereditary algebra $(A, \leq_A)$ is the case where $\filt(\Delta)=\modu A$, i.e. when the standard modules are simple; in this case $(A, \leq_A)$ is called directed.\\
Inspired by Lie algebras, one can study quasi-hereditary algebras using certain directed subalgebras, called exact Borel subalgebras, whose definition is due to König \cite{Koenig}:
\begin{definition}
    Let $(A,\leq_A)$ be a finite-dimensional algebra. A subalgebra $B$ is called an exact Borel subalgebra if
    \begin{enumerate}
        \item The induction functor  \begin{align*}
        A\otimes_B -:\modu B\rightarrow \modu A
    \end{align*}
    is exact, in other words, $A$ is projective as a right $B$-module.
    \item There is a bijection $\phi:\Sim(B)\rightarrow\Sim(A)$ such that for all $L\in \Sim(B)$ we have
    \begin{align*}
        A\otimes_B L\cong \Delta(\phi(L)).
    \end{align*}
    \item $(B, \leq_B)$ is directed with respect to the partial order
    \begin{align*}
        L\leq_B L':\Leftrightarrow \phi(L)\leq_A\phi(L').
    \end{align*}
    \end{enumerate}
\end{definition}
Exact Borel subalgebras may have additional desirable properties; in the following, we name two of them.
\begin{definition}\label{definition_regular}(\cite[Definition 3.4]{BKK} and \cite[p. 405]{Koenig})
    \begin{enumerate}
        \item An exact Borel subalgebra $B$ of a quasi-hereditary algebra $(A, \leq_A)$ is called strong if it contains a maximal semisimple subalgebra of $A$.
        \item An exact Borel subalgebra $B$ of a quasi-hereditary algebra $(A, \leq_A)$ is called regular if for every $n\geq 1$ the maps
        \begin{align*}
            \Ext^n_B(L^B, L^B)\rightarrow   \Ext^n_A(\Delta^A, \Delta^A), [f]\mapsto [\id_A\otimes_B f]
        \end{align*}
        are isomorphisms, where $L^B=\bigoplus_{L_i^B\in \Sim(B)}L_i^B$ and $\Delta^A=A\otimes_B L^B\cong \bigoplus_{L_i^A\in \Sim(A)}\Delta(L_i^A)$.
        \item An exact Borel subalgebra $B$ of a quasi-hereditary algebra $(A, \leq_A)$ is called homological if the map
        \begin{align*}
            \Ext^1_B(L^B, L^B)\rightarrow   \Ext^1_A(\Delta^A, \Delta^A), [f]\mapsto [\id_A\otimes_B f]
        \end{align*}
        is an epimorphism, and for every $n\geq 2$ the maps
        \begin{align*}
            \Ext^n_B(L^B, L^B)\rightarrow   \Ext^n_A(\Delta^A, \Delta^A), [f]\mapsto [\id_A\otimes_B f]
        \end{align*}
        are isomorphisms.
    \end{enumerate}
\end{definition} 
It is important to note that for regular exact Borel subalgebras, there are existence and uniqueness results \cite{KKO, Miemietz, uniqueness}, while the same is not true for exact Borel subalgebras in general (see \cite[Example 2.3]{Koenig} and \cite[Example 3.12]{monomialborel}).
\section{Tensor Products}
In \cite{Chan}, it was established that the tensor product of two quasi-hereditary algebras $(A,\leq_A)$ and $(A', \leq_{A'})$ is quasi-hereditary.
In this section, we show that in case $A$ and $A'$ admit exact Borel subalgebras $B$ and $B'$, the tensor product $B\otimes B'$ is an exact Borel subalgebra of $A\otimes A'$. We also investigate in which case regularity of $B$ and $B'$ gives rise to regularity of $B\otimes B'$, which, unfortunately, happens only under very strong conditions.
\begin{theorem}(\cite[Section 2]{Chan})\label{thmqh}\\
  Let $(A, \leq_A)$, $(A', \leq_{A'})$ be quasi-hereditary algebras. Then $A\otimes A'$ is quasi-hereditary with the partial order 
  \begin{align*}
    L^A_i\otimes L^{A'}_{i'}\leq L^A_j\otimes L^{A'}_{j'}:\Leftrightarrow L^A_i\leq_A  L^A_j \textup{ and } L^{A'}_{i'}\leq_{A'} L^{A'}_{j'}.
  \end{align*}
  Moreover, the standard module $\Delta_{L_i^A\otimes L_{i'}^{A'}}^{A\otimes A'}$ is given by 
  \begin{align*}
    \Delta_{L_i^A\otimes L_{i'}^{A'}}^{A\otimes A'}\cong \Delta_{L_i^A}^{A}\otimes \Delta_{L_{i'}^{A'}}^{A'}
  \end{align*}
  In particular, if $A$ and  $A'$ are directed, so is $A\otimes A'$.
\end{theorem}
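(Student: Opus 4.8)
The plan is to verify quasi-heredity directly through the standard-module characterisation of Definition \ref{definition_qh} rather than through heredity chains, since this route simultaneously produces the claimed formula for the standard modules. The first ingredient I would assemble is the behaviour of the basic data of $\modu(A\otimes A')$ under tensoring. Because $\field$ is algebraically closed, every simple $A\otimes A'$-module has the form $L\otimes L'$ with $L\in\Sim(A)$ and $L'\in\Sim(A')$, and the corresponding indecomposable projective is $P_{L\otimes L'}\cong P_L\otimes P_{L'}$. The key technical tool is the natural isomorphism
\[
\Hom_{A\otimes A'}(M\otimes M',\,N\otimes N')\cong \Hom_A(M,N)\otimes\Hom_{A'}(M',N'),
\]
valid for all finite-dimensional modules: it holds on projectives since $\Hom_A(P_L,P_M)$ is computed by idempotent truncation and the analogue for $A\otimes A'$ splits as a tensor product, and it extends to arbitrary modules by comparing projective presentations and using that $-\otimes_{\field}-$ is exact. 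I would also note in passing that the product order defined in the statement is manifestly a partial order.

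Next I would compute the standard modules with respect to this product order. Writing $\Delta_L^A=P_L/U$ and $\Delta_{L'}^{A'}=P_{L'}/U'$, where $U,U'$ are the trace submodules of Definition \ref{definition_qh}, one has
\[
\Delta_L^A\otimes\Delta_{L'}^{A'}\cong (P_L\otimes P_{L'})/(U\otimes P_{L'}+P_L\otimes U').
\]
On the other hand $\Delta_{L\otimes L'}^{A\otimes A'}=(P_L\otimes P_{L'})/V$, where $V$ is the trace of all $P_M\otimes P_{M'}$ with $M\otimes M'\nleq L\otimes L'$, that is, with $M\nleq_A L$ or $M'\nleq_{A'}L'$. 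The heart of the argument is the identity $V=U\otimes P_{L'}+P_L\otimes U'$. The inclusion $\supseteq$ is immediate, since a map $P_M\to P_L$ with $M\nleq_A L$ tensored with $\id_{P_{L'}}$ is one of the generators of $V$, as is its mirror image. For $\subseteq$ I would use the Hom-tensor isomorphism to write any generating map $P_M\otimes P_{M'}\to P_L\otimes P_{L'}$ as a sum of elementary tensors $f\otimes f'$; if $M\nleq_A L$ then $\im(f)\subseteq U$ so the image lies in $U\otimes P_{L'}$, and if $M'\nleq_{A'}L'$ it lies in $P_L\otimes U'$. This establishes the standard-module formula.

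With the standard modules in hand, the two conditions of Definition \ref{definition_qh}(2) follow cleanly. Applying the Hom-tensor isomorphism to the standard modules themselves gives $\End_{A\otimes A'}(\Delta_L^A\otimes\Delta_{L'}^{A'})\cong \End_A(\Delta_L^A)\otimes\End_{A'}(\Delta_{L'}^{A'})\cong\field\otimes\field\cong\field$. For the condition $A\otimes A'\in\filt(\Delta)$, I would take $\Delta$-filtrations of $A$ and $A'$, which exist since $A\in\filt(\Delta^A)$ and $A'\in\filt(\Delta^{A'})$, and tensor them; exactness of $-\otimes_{\field}-$ shows that a suitable refinement of the product filtration is a filtration of $A\otimes A'$ whose subquotients are exactly the modules $\Delta_L^A\otimes\Delta_{L'}^{A'}\cong\Delta_{L\otimes L'}^{A\otimes A'}$. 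Finally, directedness is immediate from the formula: if $A$ and $A'$ are directed then each $\Delta_L^A$ and $\Delta_{L'}^{A'}$ is simple, hence so is each $\Delta_{L\otimes L'}^{A\otimes A'}=L\otimes L'$, and $\filt(\Delta)=\modu(A\otimes A')$.

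I expect the main obstacle to be the trace-ideal identity $V=U\otimes P_{L'}+P_L\otimes U'$, as this is where the combinatorics of the product order meets the module theory, and it relies essentially on decomposing homomorphisms between tensor products into elementary tensors through the Hom-tensor isomorphism. The points demanding the most care are verifying that isomorphism for the non-projective standard modules and confirming that the product order is genuinely compatible with the computed standards, rather than merely being some order for which $A\otimes A'$ happens to be quasi-hereditary.
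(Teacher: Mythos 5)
The paper does not actually prove this statement: Theorem \ref{thmqh} is quoted from \cite[Section 2]{Chan} without proof, so there is no internal argument to compare yours against. Judged on its own terms, your proposal is a correct and essentially complete direct proof via Definition \ref{definition_qh}. The two load-bearing ingredients both check out. First, the natural map $\Hom_A(M,N)\otimes\Hom_{A'}(M',N')\to\Hom_{A\otimes A'}(M\otimes M',N\otimes N')$ is indeed an isomorphism for finite-dimensional modules; your sketch (isomorphism on projectives by idempotent truncation, then extension by presentations) works, though the cleanest write-up is either the adjunction $\Hom_{A\otimes A'}(M\otimes M',X)\cong\Hom_A(M,\Hom_{A'}(M',X))$ followed by pulling the finite-dimensional vector space $\Hom_{A'}(M',N')$ out, or a two-step argument fixing one variable at a time and using that both bifunctors are left exact in each variable separately (this is where $-\otimes_{\field}-$ being exact enters). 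Second, the trace identity $V=U\otimes P_{L'}+P_L\otimes U'$ is correct: for $\supseteq$ note that $M\otimes L'\nleq L\otimes L'$ exactly when $M\nleq_A L$, and for $\subseteq$ your decomposition of each generating map into elementary tensors $f\otimes f'$ does the job, since $M\nleq_A L$ forces $\im(f)\subseteq U$ for every summand. Two small points deserve explicit mention in a final write-up: the identifications $\Sim(A\otimes A')=\{L\otimes L'\}$ and $P_{L\otimes L'}\cong P_L\otimes P_{L'}$ rest on $\rad(A\otimes A')=\rad(A)\otimes A'+A\otimes\rad(A')$, which uses that $\field$ is algebraically closed (the paper invokes the same fact in the proof of Theorem \ref{thm_tensor}); and your closing worry about ``compatibility of the order with the computed standards'' is vacuous, since you computed $\Delta_{L\otimes L'}$ directly from the product order, so nothing further needs to be confirmed there.
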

\begin{theorem}\label{thm_tensor}
 Let $A, A'$ be quasi-hereditary algebras with exact Borel subalgebras $B, B'$. Then $B\otimes B'$ is an exact Borel subalgebra of $A\otimes A'$.\\
 Moreover, $B\otimes B'$ is strong if and only if $B$ and $B'$ are.
\end{theorem}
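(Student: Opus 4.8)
The plan is to verify the three defining properties of an exact Borel subalgebra for the subalgebra $B\otimes B'\subseteq A\otimes A'$ (which is indeed a subalgebra, since $B\hookrightarrow A$ and $B'\hookrightarrow A'$ stay injective after tensoring over the field $\field$), and then to reduce the strongness statement to a dimension count on semisimple quotients. First, for the exactness/projectivity condition, since $A$ is projective as a right $B$-module and $A'$ is projective as a right $B'$-module, each is a direct summand of a free module; tensoring the two splittings over $\field$ exhibits $A\otimes A'$ as a direct summand of a free right $B\otimes B'$-module, so $A\otimes A'$ is projective as a right $B\otimes B'$-module and induction is exact. Next, because $\field$ is algebraically closed, the simple $B\otimes B'$-modules are exactly the modules $L\otimes L'$ with $L\in\Sim(B)$, $L'\in\Sim(B')$, and likewise for $A\otimes A'$; I would therefore set $\psi:=\phi\otimes\phi'\colon \Sim(B\otimes B')\to\Sim(A\otimes A')$, $L\otimes L'\mapsto \phi(L)\otimes\phi'(L')$, which is a bijection.

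For the second condition I would use the natural interchange isomorphism of left $A\otimes A'$-modules
\[
(A\otimes A')\otimes_{B\otimes B'}(L\otimes L')\;\cong\;(A\otimes_B L)\otimes(A'\otimes_{B'}L').
\]
Applying the isomorphisms $A\otimes_B L\cong \Delta^A_{\phi(L)}$ and $A'\otimes_{B'}L'\cong \Delta^{A'}_{\phi'(L')}$ coming from the exact Borel structures, followed by the tensor factorisation of standard modules from Theorem \ref{thmqh}, yields $(A\otimes A')\otimes_{B\otimes B'}(L\otimes L')\cong \Delta^{A\otimes A'}_{\psi(L\otimes L')}$, as required. For the third condition I would first check that the order $\leq_{B\otimes B'}$ induced by $\psi$ coincides with the tensor order: unwinding the definitions of $\leq_B$, $\leq_{B'}$ and of the tensor order on $A\otimes A'$ from Theorem \ref{thmqh}, both $\psi(L\otimes L')\leq_{A\otimes A'}\psi(M\otimes M')$ and $L\otimes L'\leq_{B\otimes B'}M\otimes M'$ reduce to the conjunction $L\leq_B M$ and $L'\leq_{B'}M'$. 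Since $(B,\leq_B)$ and $(B',\leq_{B'})$ are directed, Theorem \ref{thmqh} shows $(B\otimes B',\leq_{B\otimes B'})$ is directed, completing the proof that $B\otimes B'$ is an exact Borel subalgebra.

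For the strongness equivalence I would first record the criterion that an exact Borel subalgebra $B$ of $A$ is strong if and only if $\dim_\field(B/\rad B)=\dim_\field(A/\rad A)$. Indeed, any maximal semisimple subalgebra of $B$ is a semisimple subalgebra of $A$ and hence, by Wedderburn--Malcev, is contained in a maximal semisimple subalgebra of $A$, giving the inequality $\dim_\field(B/\rad B)\le\dim_\field(A/\rad A)$ in general; a maximal semisimple subalgebra $S$ of $A$ with $S\subseteq B$ meets $\rad B$ in a nilpotent two-sided ideal of $S$, hence trivially, so $S\hookrightarrow B/\rad B$ and equality is forced; conversely, under equality a maximal semisimple subalgebra $T$ of $B$ satisfies $\dim_\field T=\dim_\field(A/\rad A)$, so any maximal semisimple subalgebra of $A$ containing $T$ equals $T\subseteq B$, witnessing strongness. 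Using $\rad(C\otimes D)=\rad(C)\otimes D+C\otimes\rad(D)$ over the algebraically closed field $\field$, one obtains $(B\otimes B')/\rad(B\otimes B')\cong (B/\rad B)\otimes(B'/\rad B')$ and similarly for $A\otimes A'$, so the relevant dimensions multiply. Since $\dim_\field(B/\rad B)\le\dim_\field(A/\rad A)$ and $\dim_\field(B'/\rad B')\le\dim_\field(A'/\rad A')$ with all quantities positive, the product equality holds precisely when both factor equalities hold, which is exactly the asserted equivalence. For the "if" direction one may alternatively argue directly: if $S\subseteq B$ and $S'\subseteq B'$ are maximal semisimple subalgebras of $A$ resp. $A'$, then $S\otimes S'\subseteq B\otimes B'$ is semisimple and of the correct dimension, hence a maximal semisimple subalgebra of $A\otimes A'$.

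I expect the main obstacle to be the strongness equivalence, and specifically its forward ("only if") direction: unlike the "if" direction, it cannot be witnessed by an explicit subalgebra and instead requires the dimension criterion together with the behaviour of the Jacobson radical under tensor products over $\field$. The three exact Borel axioms, by contrast, should follow fairly mechanically from the interchange isomorphism and Theorem \ref{thmqh}.
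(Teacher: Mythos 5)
Your proposal is correct and follows essentially the same route as the paper: projectivity of $A\otimes A'$ over $B\otimes B'$ from projectivity of the two factors, the interchange isomorphism $(A\otimes A')\otimes_{B\otimes B'}(L\otimes L')\cong(A\otimes_B L)\otimes(A'\otimes_{B'}L')$ combined with Theorem \ref{thmqh} for the standard modules and directedness, and a comparison of dimensions of (maximal) semisimple subalgebras for strongness. Your dimension-count formulation of the strongness criterion is just a slightly more explicit packaging of the paper's argument that all maximal semisimple subalgebras of an algebra have the same dimension, so no substantive difference.
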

\begin{proof}
  Note that the projective right $B\otimes B'$-modules are exactly given by tensor products of a projective right $B$-module with a projective right $B'$-module. Hence $A\otimes A'$ is projective as a right $B\otimes B'$-module.
  Moreover, $B\otimes B'$ is directed by Theorem \ref{thmqh}.\\
  By assumption there are bijections
  \begin{align*}
    \varphi:\Sim(B)\rightarrow \Sim(A), L_i^B\mapsto L_i^A\\
    \varphi':\Sim(B')\rightarrow \Sim(A'), L_{i'}^{B'}\mapsto L_{i'}^{A'}
  \end{align*}
  such that 
  \begin{align*}
    A\otimes_B L_i^B\cong \Delta_{L_i^A}^A\\
    A'\otimes_{B'}L_{i'}^{B'}\cong \Delta_{L_{i'}^{A'}}^{A'}.
  \end{align*}
  Hence
  \begin{align*}
    \psi: \Sim(B\otimes B')\rightarrow \Sim(A\otimes A'), L_i^B\otimes L_{i'}^{B'}\mapsto L_i^A\otimes L_{i'}^{A'}
  \end{align*}
  is a bijection such that
  \begin{align*}
    (A\otimes A')\otimes_{B\otimes B'}(L_i^B\otimes L_{i'}^{B'})\cong (A\otimes_B L_i^A)\otimes (A'\otimes_{B'} L_{i'}^{B'})\\
    \cong \Delta_{L_i^A}^A\otimes \Delta_{L_{i'}^{A'}}^{A'}\cong \Delta_{L_i^A\otimes L_{i'}^{A'}}^{A\otimes A'}.
  \end{align*}
  Moreover, let $L^B$ be a maximal semisimple subalgebra of $B$, $L^{B'}$ be a maximal semisimple subalgebra of $B'$. Obviously, $\rad(B)\otimes B'+B\otimes \rad(B')\subseteq \rad(B\otimes B')$, so that, since tensor products of semisimple algebras over an algebraically closed field are semisimple, the subalgebra $L^B\otimes L^{B'}$ is a maximal semisimple subalgebra of $B\otimes B'$. If $B$ and $B'$ are strong, then $L^B=L^A$ is also a maximal semisimple subalgebra of $A$ and $L^{B'}=L^{A'}$ is a maximal semisimple subalgebra of $A'$, so that $L^B\otimes L^{B'}=L^A\otimes L^{A'}$ is a maximal semisimple subalgebra of $A\otimes A'$. If, on the other hand, $B$ is not strong, then $L^B$ is not maximal, and thus properly contained $L^B\subsetneq L^A$ in a semisimple subalgebra of $A$. Hence 
  \begin{align*}
    L^B\otimes L^{B'}\subsetneq L^A\otimes L^{B'}
  \end{align*}
  where the latter is a semisimple subalgebra of $A\otimes A'$. In particular $L^B\otimes L^{B'}$ is not a maximal semisimple subalgebra of $A\otimes A'$. Since all maximal semisimple subalgebras of an algebra are of the same dimension, this implies that $B\otimes B'$ is not strong.
\end{proof}
\begin{example}\cite[Example A1]{KKO}
  Let $A$ be the algebra given by 
\[\begin{tikzcd}[ampersand replacement=\&]
	1 \& 2
	\arrow["\alpha", curve={height=-6pt}, from=1-1, to=1-2]
	\arrow["\beta", curve={height=-6pt}, from=1-2, to=1-1]
\end{tikzcd}\]
with relations $\alpha\beta=0$ and order $1<2$.
Then
\begin{align*}
	&P_1:=\begin{pmatrix} 1 \\ 2\\ 1 \end{pmatrix}
  &P_2:=\begin{pmatrix} 2 \\ 1\end{pmatrix}\\
	&\Delta_1=L_1=\begin{pmatrix} 1\end{pmatrix}
  &\Delta_2=P_2=\begin{pmatrix} 2\\ 1\end{pmatrix}.\\
\end{align*}
This has strong regular exact Borel subalgebra
\[\begin{tikzcd}[ampersand replacement=\&]
	1 \& 2.
	\arrow["\alpha", from=1-1, to=1-2]
\end{tikzcd}\]
Now consider the tensor product $A\otimes A$ given by the quiver
\[\begin{tikzcd}[ampersand replacement=\&]
	{(1,1)} \&\& {(2,1)} \\
	\\
	{(1,2)} \&\& {(2,2)}
	\arrow["{\alpha\otimes \id_1}", curve={height=-6pt}, from=1-1, to=1-3]
	\arrow["{\id_1\otimes \alpha}"', curve={height=6pt}, from=1-1, to=3-1]
	\arrow["{\id_2\otimes\alpha}"', curve={height=6pt}, from=1-3, to=3-3]
	\arrow["{\alpha\otimes \id_2}", curve={height=-6pt}, from=3-1, to=3-3]
	\arrow["{\id_2\otimes \beta}"', curve={height=6pt}, from=3-3, to=1-3]
	\arrow["{\beta\otimes\id_1}", curve={height=-6pt}, from=1-3, to=1-1]
	\arrow["{\beta\otimes\id_2}", curve={height=-6pt}, from=3-3, to=3-1]
	\arrow["{\id_1\otimes\beta}"', curve={height=6pt}, from=3-1, to=1-1]
\end{tikzcd}\]
with relations $(a\otimes \id_y)\circ (\id_x\otimes b)=(\id_x\otimes b)\circ (a\otimes \id_y)$ for $a, b\in \{\alpha,\beta\}$, $x,y\in \{1,2\}$ such that the composition is well defined, as well as $\alpha\beta\otimes \id_x=0$ and $1_x\otimes \alpha\beta=0$ for $x\in \{1,2\}$.
The algebra $A\otimes A$ is equipped with the induced partial order ${1\otimes 1< 1\otimes 2, 2\otimes 1}$ and $2\otimes 2>1\otimes 2, 2\otimes 1$
The projectives are given by 
\[\begin{tikzcd}[ampersand replacement=\&]
	\&\& {(1,1)} \\
	\& {(1,2)} \&\& {(2,1)} \\
	{(1,1)} \&\& {(2,2)} \&\& {(1,1)} \\
	\& {(2,1)} \&\& {(1,2)} \\
	\&\& {(1,1)}
	\arrow[from=1-3, to=2-2]
	\arrow[from=1-3, to=2-4]
	\arrow[from=2-2, to=3-3]
	\arrow[from=2-4, to=3-3]
	\arrow[from=2-2, to=3-1]
	\arrow[from=2-4, to=3-5]
	\arrow[from=3-5, to=4-4]
	\arrow[from=3-1, to=4-2]
	\arrow[from=3-3, to=4-2]
	\arrow[from=3-3, to=4-4]
	\arrow[from=4-4, to=5-3]
	\arrow[from=4-2, to=5-3]
\end{tikzcd}\]
$\begin{tikzcd}[ampersand replacement=\&]
	\& {(1,2)} \\
	{(1,1)} \&\& {(2,2)} \\
	\& {(2,1)} \&\& {(1,2)} \\
	\&\& {(1,1)}
	\arrow[from=2-3, to=3-2]
	\arrow[from=1-2, to=2-3]
	\arrow[from=2-3, to=3-4]
	\arrow[from=3-4, to=4-3]
	\arrow[from=2-1, to=3-2]
	\arrow[from=3-2, to=4-3]
	\arrow[from=1-2, to=2-1]
\end{tikzcd}$
$\begin{tikzcd}[ampersand replacement=\&]
	\& {(2,1)} \\
	{(1,1)} \&\& {(2,2)} \\
	\& {(1,2)} \&\& {(2,1)} \\
	\&\& {(1,1)}
	\arrow[from=2-3, to=3-2]
	\arrow[from=1-2, to=2-3]
	\arrow[from=2-3, to=3-4]
	\arrow[from=3-4, to=4-3]
	\arrow[from=2-1, to=3-2]
	\arrow[from=3-2, to=4-3]
	\arrow[from=1-2, to=2-1]
\end{tikzcd}$
and
\[\begin{tikzcd}[ampersand replacement=\&]
	\& {(2,2)} \\
	{(1,2)} \&\& {(2,1)} \\
	\& {(1,1)}
	\arrow[from=1-2, to=2-1]
	\arrow[from=1-2, to=2-3]
	\arrow[from=2-3, to=3-2]
	\arrow[from=2-1, to=3-2]
\end{tikzcd}\]
and the standard modules are given by $\Delta_{(1,1)}=L_{(1,1)}$, $\Delta_{(1,2)}$ is the module with Loewy diagram
\[\begin{tikzcd}[ampersand replacement=\&]
	\& {(1,2)} \\
	{(1,1),}
	\arrow[from=1-2, to=2-1]
\end{tikzcd}\]
 $\Delta_{(2,1)}$ is the module with Loewy diagram
\[\begin{tikzcd}[ampersand replacement=\&]
	\& {(2,1)} \\
	{(1,1),}
	\arrow[from=1-2, to=2-1]
\end{tikzcd}\]
and $\Delta_{(2,2)}=P_{(2,2)}$.
Moreover, the costandard modules are given by $L_{(1,1)}$,\\
$\begin{tikzcd}[ampersand replacement=\&]
	{(1,1)} \\
	\& {(1,2)}
	\arrow[from=1-1, to=2-2]
\end{tikzcd}$,
$\begin{tikzcd}[ampersand replacement=\&]
	{(1,1)} \\
	\& {(2,1)}
	\arrow[from=1-1, to=2-2]
\end{tikzcd}$
and
$\begin{tikzcd}[ampersand replacement=\&]
	\& {(1,1)} \\
	{(1,2)} \&\& {(2,1)} \\
	\& {(2,2)}
	\arrow[from=1-2, to=2-3]
	\arrow[from=1-2, to=2-1]
	\arrow[from=2-1, to=3-2]
	\arrow[from=2-3, to=3-2]
\end{tikzcd}$.\\
Now by Theorem \ref{thm_tensor}, $A\otimes A$ also has an exact Borel subalgebra $B\otimes B$ given by 
\[\begin{tikzcd}[ampersand replacement=\&]
	{(1,1)} \& {(2,1)} \\
	{(1,2)} \& {(2,2)}
	\arrow["{\id_1\otimes\alpha}"', from=1-1, to=2-1]
	\arrow["{\alpha\otimes\id_2}"', from=2-1, to=2-2]
	\arrow["{\alpha\otimes \id_1}", from=1-1, to=1-2]
	\arrow["{\id_2\otimes\alpha}", from=1-2, to=2-2]
\end{tikzcd}\]
with relation $(\alpha\otimes \id_2)\circ (\id_1\otimes \alpha)=(\id_2\otimes \alpha)\circ (\alpha\otimes \id_1)$. However, this is not regular, since $\rad(\Delta^{A\otimes A}_{(2,2)})$ has no costandard filtration, so that by \cite[Theorem D]{Conde}, $A\otimes A$ has no regular exact Borel subalgebra. To make this more explicit, let us calculate the up to isomorphism unique Morita equivalent quasi-hereditary algebra admitting a basic regular exact Borel subalgebra, as well as that subalgebra. Following the construction in \cite{KKO}, we calculate the Yoneda algebra $\Ext^*_A(\Delta, \Delta)$, together with a collection of higher multiplications $m_n:\Ext^*_A(\Delta, \Delta)^{\otimes_L n}\rightarrow \Ext^*_A(\Delta, \Delta)$, which specify the A-infinity algebra structure on $\Ext^*_A(\Delta, \Delta)$. For an introduction to A-infinity algebras, see for example \cite{Keller}. We begin by fixing projective resolutions
\[\begin{tikzcd}[ampersand replacement=\&, column sep= large]
	{P_{2,2}} \& {P_{1,2}\oplus P_{2,1}} \&\& {P_{1,1}} \& {\Delta_{1,1}}
	\arrow["\begin{array}{c} \begin{pmatrix}r_{\beta}\otimes\id_2\\\-\id_2\otimes r_{\beta}\end{pmatrix} \end{array}", from=1-1, to=1-2]
	\arrow["{\begin{pmatrix}\id_1\otimes r_{\beta} & r_{\beta}\otimes \id_1\end{pmatrix}}", from=1-2, to=1-4]
	\arrow[from=1-4, to=1-5]
\end{tikzcd}\]
\[\begin{tikzcd}[ampersand replacement=\&, column sep= large]
	{P_{2,2}} \& {P_{1,2}} \& {\Delta_{1,2}}
	\arrow["{r_{\beta}\otimes\id_2}", from=1-1, to=1-2]
	\arrow[from=1-2, to=1-3]
\end{tikzcd}\]
\[\begin{tikzcd}[ampersand replacement=\&, column sep= large]
	{P_{2,2}} \& {P_{1,2}} \& {\Delta_{2,1}}
	\arrow["{\id_2\otimes r_{\beta}}", from=1-1, to=1-2]
	\arrow[from=1-2, to=1-3]
\end{tikzcd}\]
and 
\[\begin{tikzcd}[ampersand replacement=\&, column sep= large]
	{P_{2,2}} \& {\Delta_{2,1}}
	\arrow[from=1-1, to=1-2]
\end{tikzcd}\]
of the standard modules. 
Then, we can explicitly calculate the non-identity, non-zero homomorphism spaces 
\begin{align*}
    &\Hom_A(\Delta_{1,1}, \Delta_{1,2})=\field \id_1\otimes r_{\alpha}, 
    &\Hom_A(\Delta_{1,1}, \Delta_{2,1})=\field r_{\alpha}\otimes \id_1,\\
    &\Hom_A(\Delta_{1,2}, \Delta_{2,2})=\field r_{\alpha}\otimes \id_2, 
     &\Hom_A(\Delta_{1,2}, \Delta_{2,2})=\field \id_2\otimes r_{\alpha}, \\ 
    &\Hom_A(\Delta_{1,1}, \Delta_{2,2})=\field r_{\alpha}\otimes r_{\alpha}&
\end{align*}    
as well as the non-zero extensions between the standard modules
\begin{align*}
    &\Ext^1_A(\Delta_{1,1}, \Delta_{1,2})=\field [\phi], 
    &\Ext^1_A(\Delta_{1,1}, \Delta_{2,1})=\field [\phi'], 
    &\Ext^1_A(\Delta_{1,2}, \Delta_{2,2})=\field [\eta], \\ 
     &\Ext^1_A(\Delta_{2,1}, \Delta_{2,2})=\field [\eta'], 
    &\Ext^1_A(\Delta_{1,1}, \Delta_{2,2})=\field [\nu]\oplus \field [\nu'],
    &\Ext^2_A(\Delta_{1,1}, \Delta_{1,2})=\field [\varphi],\\
\end{align*}
where 
\begin{align*}
    &\phi_0=\pi_{P_{1,2}}, \phi_1=\id_{P_{2,2}}, \phi_n=0 &\forall n\geq 3\\
    &\phi'_0=\pi_{P_{2,1}}, \phi_1=-\id_{P_{2,2}}, \phi'_n=0 &\forall n\geq 3\\
    &\eta_0=\id_{P_{2,2}}, \eta_n=0 &\forall n\geq 2\\
    &\eta_0'=\id_{P_{2,2}}, \eta'_n=0 &\forall n\geq 2\\
    &\nu_0=(r_{\alpha}\otimes\id_2)\circ \pi_{P_{1,2}},\nu_2=0 &\forall n\geq 2\\
    &\nu'_0=(\id_2\otimes r_{\alpha})\circ \pi_{P_{2,1}}, \nu'_2=0 &\forall n\geq 2\\
    &\varphi_0=\id_{P_{2,2}}, \varphi=0 &\forall n\geq 2.\\
\end{align*}
The non-zero compositions among these are given by
\begin{align*}
  \field r_{\alpha}\otimes \id_2 \circ  (\id_1\otimes r_{\alpha})&= (\id_2\otimes r_{\alpha})\circ \field r_{\alpha}\otimes \id_1=\field r_{\alpha}\otimes r_{\alpha}\\
  [\eta']\circ (r_{\alpha}\otimes \id_1)&=(r_{\alpha}\otimes \id_2)\circ [\phi]=[\nu]\\
  [\eta] \circ(\id_1\otimes r_{\alpha})&=(\id_2\otimes r_{\alpha})\circ [\phi']=[\nu']\\
  [\eta]\circ [\psi]&=-[\psi]\circ [\eta]=[\varphi].
\end{align*} 
Note that since the standard modules form a directed poset of height three, it follows that $\Ext^*_A(\Delta, \Delta)^{\otimes_L n}=(0)$ for all $n\geq 3$, so that $m_n=0$ for all $n\geq 3$. Hence, the only non-zero multiplication is $m_2$, which comes from the compositions above.
We can thus calculate, using the algorithm and notation from \cite{KKO}, that the corresponding bocs is given by 
\[\begin{tikzcd}[ampersand replacement=\&]
	{(1,1)} \&\& {(1,2)} \\
	\\
	{(2,1)} \&\& {(2,2)}
	\arrow["{(\id_1\otimes r_{\alpha})^*}", curve={height=-6pt}, dashed, from=1-1, to=1-3]
	\arrow["{[\phi]^*}"{description}, curve={height=6pt}, from=1-1, to=1-3]
	\arrow["{[\phi']^*}"{description}, curve={height=-6pt}, from=1-1, to=3-1]
	\arrow["{(r_{\alpha}\otimes \id_1)^*}"', curve={height=6pt}, dashed, from=1-1, to=3-1]
	\arrow["{[\nu]^*}", curve={height=-12pt}, from=1-1, to=3-3]
	\arrow["{[\nu']^*}"', curve={height=12pt}, from=1-1, to=3-3]
	\arrow["{(r_{\alpha}\otimes r_{\alpha})^*}"{description}, dashed, from=1-1, to=3-3]
	\arrow["{[\eta]^*}"{description}, curve={height=6pt}, from=1-3, to=3-3]
	\arrow["{(r_{\alpha}\otimes \id_2)^*}", curve={height=-6pt}, dashed, from=1-3, to=3-3]
	\arrow["{[\eta']^*}"{description}, curve={height=-6pt}, from=3-1, to=3-3]
	\arrow["{(\id_2\otimes r_{\alpha})^*}"', curve={height=6pt}, dashed, from=3-1, to=3-3]
\end{tikzcd}\]
with relation $[\eta']^*\cdot [\phi']^*= [\eta]^*\cdot [\phi]^*$ and differential 
\begin{align*}
    \partial [\nu]^*= [\eta']^*\otimes (r_{\alpha}\otimes \id_1)^*+ (r_{\alpha}\otimes \id_2)^*\otimes [\phi]^*\\
     \partial [\nu']^*= [\eta]^*\otimes (\id_1\otimes r_{\alpha})^*+ (\id_2\otimes r_{\alpha})^*\otimes [\phi']^*\\
     \partial (r_{\alpha}\otimes r_{\alpha})^*=(\id_2\otimes r_{\alpha})^*\otimes (r_{\alpha}\otimes \id_1)^*+(r_{\alpha}\otimes \id_2)^* \otimes (\id_1\otimes r_{\alpha})^* 
\end{align*}
Let us denote by $Q_0$ the quiver consisting of the solid arrows above. Then $B:=\field Q_0/\langle [\eta']^*\cdot [\phi']^*-[\eta]^*\cdot [\phi]^*\rangle$ is the up to isomorphism unique basic algebra that is a regular exact Borel subalgebra of an algebra $R$ Morita equivalent to $A$. In fact, by \cite[Theorem A]{Conde}, $R$ can be chosen as $\End_A(A\oplus P_{2,2}'\oplus P_{2,2}'')^{\op}$, where $P_{2,2}= P_{2,2}'=P_{2,2}''$. We claim that the embedding $\iota:  B\rightarrow R$ given by
\begin{align*}
   &e_{1,1}\mapsto (\id_{P_1}+\id_{P_{2,2}'}+\id_{P_{2,2}''})
 &e_{1,2}\mapsto \id_{P_{1,2}}\\
  &e_{2,1}\mapsto \id_{P_{2,1}}
 & e_{2,2}\mapsto \id_{P_{2,2}}\\
  &[\phi]^* \mapsto \begin{pmatrix}\id_1\otimes \alpha \\ -\beta\otimes\id_2\end{pmatrix}: P_{1,2}\rightarrow P_{1,1}\oplus P_{2,2}'
  &[\nu]^* \mapsto (\id: P_{2,2}\rightarrow P_{2,2}')\\
 &[\phi']^* \mapsto \begin{pmatrix}\alpha\otimes \id_1 \\ -\id_2\otimes \beta\end{pmatrix}:P_{2,1}\rightarrow P_{1,1}\oplus P_{2,2}''
 &[\nu']^* \mapsto (\id: P_{2,2}\rightarrow P_{2,2}'')\\
 &[\eta]^* \mapsto (\alpha\otimes \id_2:P_{2,2}\rightarrow P_{1,2})
 &[\eta']^* \mapsto (\id_2\otimes \alpha: P_{2,2}\rightarrow P_{2,1})
\end{align*}
turns $B$ into a regular exact Borel subalgebra of $R$. It is clear that $B$ is via $\iota$ a directed subalgebra of $A$. Hence, in order to show that $B$ is an exact Borel subalgebra of $A$, it suffices by \cite[Theorem A]{Koenig} to show that the costandard modules of $R$ restrict to the injective $B$-modules. Since $\nabla_{1,1}^A$ and thus $\nabla_{1,1}^R$  is simple, it automatically restricts to $I_{1,1}^B=L_{1,1}^B$. Moreover, $\nabla_{1,2}^A$ and thus $\nabla_{1,2}^R$ have exactly two composition factors, as has $I_{1,2}^B$. Therefore, it suffices to check that the restriction of $\nabla_{1,2}^R$ to a left $B$-module is not semisimple. Let $f:P_{1,1}\rightarrow \nabla_{1,2}^A$ be the canonical projection. Then $\iota([\phi]^*)\cdot f\neq 0$. Viewing $f$ as an element of $\Hom_A(A\oplus  P_{2,2}'\oplus P_{2,2}'', \nabla_{1,2}^A)$, we can see that this implies that $\nabla_{1,2}^R$ is not semisimple as a $B$-module. By symmetry, the same holds for $\nabla_{2,1}^R$. It remains to be shown that ${}_{B|}\nabla_{2,2}^R$ is injective. It is easy to check that $\nabla_{2,2}^R\cong \Hom_A(A\oplus  P_{2,2}'\oplus P_{2,2}'', \nabla_{2,2}^A)$ is six-dimensional, as is $I_{2,2}^B$, so that it suffices to show that  ${}_{B|}\nabla_{2,2}^R$ has a simple socle. More precisely,  $\Hom_A(A\oplus  P_{2,2}'\oplus P_{2,2}'', \nabla_{2,2}^A)$ is spanned by the homomorphisms
\begin{align*}
    f_1:P_{1,1}&\rightarrow \nabla_{2,2}^A, &e_{1,1}\mapsto (\alpha\otimes\alpha)^*\\
    f_2:P_{2,1}&\rightarrow \nabla_{2,2}^A, &e_{2,1}\mapsto (\id_2\otimes\alpha)^*\\
    f_3:P_{1,2}&\rightarrow \nabla_{2,2}^A, &e_{1,2}\mapsto (\alpha\otimes\id_2)^*,\\
    f_4:P_{2,2}&\rightarrow \nabla_{2,2}^A, &e_{2,2}\mapsto e_{2,2}^*\\
    f_4':P_{2,2}'&\rightarrow \nabla_{2,2}^A, &e_{2,2}\mapsto e_{2,2}^* \\
    f_4'':P_{2,2}'&\rightarrow \nabla_{2,2}^A, &e_{2,2}\mapsto e_{2,2}^*,
\end{align*}
where we have identified the costandard module $\nabla_{2,2}^A$ with the corresponding submodule of $\Hom_{\field}(P_{2,2}^{A^{\op}}, \field)\cong I_{2,2}$.
Moreover,
\begin{align*}
    \iota([\nu]^*)\cdot f_4'= \iota([\nu']^*)\cdot f_4''&=f_4\\
    \iota([\eta]^*)\cdot f_3= \iota([\eta']^*)\cdot f_2&=f_4\\
     \iota([\eta]^*)\cdot \iota([\phi]^*)\cdot f_1&=f_4.
\end{align*}
Therefore, the socle of ${}_{B|}\nabla_{2,2}^R$ is $\field f_4\cong L_{2,2}^B$.\\
Let us consider the one-extension between the simple $B$-modules $L_{1,1}^B$ and $L_{2,2}^B$ given by $Be_{1,1}^B/B\{[\phi']^*, [\phi]^*, [\nu']^*\}$. 
Inducing this to an $R$-module, we obtain
\begin{align*}
    &R\otimes_B Be_{1,1}^B/B\{[\phi']^*, [\phi]^*, [\nu']^*\}\cong R\iota (e_{1,1})/(R\{\iota([\phi']^*), \iota([\phi]^*), \iota([\nu']^*)\})\\
    \cong &R(\id_{P_{1,1}}+\id{P_{2,2}'})/(R\{\iota([\phi']^*), \alpha\otimes \id_1\})\\
    \cong &\Hom_A(A\oplus P_{2,2}'\oplus P_{2,2}'', (P_{1,1}\oplus P_{2,2})/(\im(\alpha\otimes\id_1)+\im(\id\otimes_1\alpha-\beta\otimes\id_2))).
\end{align*}
Note that $(P_{1,1}\oplus P_{2,2})/(\im(\alpha\otimes\id_1)+\im(\id\otimes_1\alpha-\beta\otimes\id_2)))$ has Loewy diagram 
\[\begin{tikzcd}[ampersand replacement=\&]
	\& {(2,2)} \&\& {(1,1)} \\
	{(2,1)} \&\& {(1,2)} \\
	\& {(1,1)}
	\arrow[from=1-2, to=2-1]
	\arrow[from=1-2, to=2-3]
	\arrow[from=1-4, to=2-3]
	\arrow[from=2-1, to=3-2]
	\arrow[from=2-3, to=3-2]
\end{tikzcd}\]
as an $A$-module, so that $R\otimes_B Be_{1,1}^B/B\{[\phi']^*, [\phi]^*, [\nu']^*\}$ has the same Loewy diagram as an $R$-module.
Similarly, $Be_{1,1}^B/B\{[\phi']^*, [\phi]^*, [\nu]^*\}$ induces to an $R$-module with Loewy diagram
\[\begin{tikzcd}[ampersand replacement=\&]
	{(1,1)} \&\& {(2,2)} \\
	\& {(2,1)} \&\& {(1,2)} \\
	\&\& {(1,1)}
	\arrow[from=1-1, to=2-2]
	\arrow[from=1-3, to=2-2]
	\arrow[from=1-3, to=2-4]
	\arrow[from=2-2, to=3-3]
	\arrow[from=2-4, to=3-3]
\end{tikzcd}\]
In particular, $R\otimes_B -$ induces an isomorphism $$\Ext^1_B(L_{1,1}^B, L_{2,2}^B)\rightarrow \Ext^1_R(\Delta_{1,1}^R, \Delta_{2,2}^R).$$
Let us consider the extension $Be_{1,1}/B\{[\nu']^*, [\nu]^*, [\phi']^*\}$ of the simple $B$-modules $L_{1,1}^B$ and $L_{1,2}^B$. Inducing this to an $R$-module, we obtain
\begin{align*}
    R\otimes_B Be_{1,1}/B\{[\nu']^*, [\nu]^*, [\phi']^*\}&\cong R\id_{1,1}/R(\alpha\otimes\id_2)\\
    &\cong \Hom(A\oplus P_{2,2}'\oplus P_{2,2}'', P_{1,1}/\im(\alpha\otimes \id_1)).
\end{align*}
The $A$-module $P_{1,1}/\im(\alpha\otimes \id_1)$ has Loewy diagram
\[\begin{tikzcd}[ampersand replacement=\&]
	\&\& {(1,1)} \\
	\& {(1,2)} \\
	{(1,1),}
	\arrow[from=1-3, to=2-2]
	\arrow[from=2-2, to=3-1]
\end{tikzcd}\]
which implies that $R\otimes_B-$ induces an isomorphism
$$\Ext^1_B(L_{1,1}^B, L_{1,2}^B)\rightarrow \Ext^1_R(\Delta_{1,1}^R, \Delta_{1,2}^R).$$
Analogously, $R\otimes_B-$ induces an isomorphism
$$\Ext^1_B(L_{1,1}^B, L_{2,1}^B)\rightarrow \Ext^1_R(\Delta_{1,1}^R, \Delta_{2,1}^R).$$
Similarly, one can check that $R\otimes_B-$ induces isomorphisms
$$\Ext^1_B(L_{1,2}^B, L_{2,2}^B)\rightarrow \Ext^1_R(\Delta_{1,2}^R, \Delta_{2,2}^R)$$
and
$$\Ext^1_B(L_{2,1}^B, L_{2,2}^B)\rightarrow \Ext^1_R(\Delta_{2,1}^R, \Delta_{2,2}^R).$$
Finally, since the up to a scalar unique 2-extension of $B$ given by 
\[\begin{tikzcd}[ampersand replacement=\&]
	{(0)} \& {L_{2,2}^B} \& {Be_{1,2}} \& {Be_{1,1}/B\{[\nu']^*, [\nu]^*, [\phi']^*\}} \& {L_{1,1}^B} \& {(0)}
	\arrow[from=1-1, to=1-2]
	\arrow[from=1-2, to=1-3]
	\arrow[from=1-3, to=1-4]
	\arrow[from=1-4, to=1-5]
	\arrow[from=1-5, to=1-6]
\end{tikzcd}\] is a product of the two up to a non-zero scalar unique non-trivial 1-extensions corresponding in $\Ext^1_B(L_{1,1}^B, L_{1,2}^B)$ and $\Ext^1_B(L_{1,2}^B, L_{2,2}^B)$, we obtain that it maps under $R\otimes_B-$ to the product of the up to a non-zero scalar unique non-trivial 1-extensions in $\Ext^1_R(\Delta_{1,1}^R, \Delta_{1,2}^R)$ and $\Ext^1_R(\Delta_{1,2}^R, \Delta_{2,2}^R)$, which is the up to a non-zero scalar unique non-tirival 2-extension of $R$. Therefore, we can conclude that $B$ is regular.
\end{example}
This provides an example of the fact that tensor products of regular exact Borel subalgebras are in general not regular. In fact, regularity of $B\otimes B'$ holds only in rare cases:
\begin{proposition}\label{proposition_regular}
	Suppose $B$ and $B'$ are regular. Then the following statements are equivalent: 
		\begin{enumerate}
			\item $B\otimes B'$ is regular.
			\item $B\otimes B'$ is homological.
            \item One of the following statements holds:
            \begin{enumerate}
			\item $A$ and $A'$ are directed.
			\item $A^{\op}$ and $(A')^{\op}$ are directed.
			\item $A$ is semisimple.
            \item $A'$ is semisimple.
            \end{enumerate}
		\end{enumerate}
\end{proposition}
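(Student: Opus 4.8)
My plan is to compute both relevant Yoneda algebras by a Künneth formula and then compare them degree by degree. Recall that $B\otimes B'$ is an exact Borel subalgebra of $A\otimes A'$ by Theorem~\ref{thm_tensor}, so the comparison maps of Definition~\ref{definition_regular} make sense. Set $E=\Ext^*_B(L^B,L^B)$ and $F=\Ext^*_A(\Delta^A,\Delta^A)$, and let $\rho\colon E\to F$, $[f]\mapsto[\id_A\otimes_B f]$, be the comparison map; define $E',F',\rho'$ analogously over $B',A'$. Since we work over a field, the external tensor product of a projective resolution of $L^B$ over $B$ with one of $L^{B'}$ over $B'$ resolves $L^B\otimes L^{B'}$ over $B\otimes B'$, and applying induction identifies $(A\otimes A')\otimes_{B\otimes B'}-$ with the external tensor of $A\otimes_B-$ and $A'\otimes_{B'}-$. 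This yields Künneth isomorphisms $\Ext^*_{B\otimes B'}(L^B\otimes L^{B'},L^B\otimes L^{B'})\cong E\otimes E'$ and $\Ext^*_{A\otimes A'}(\Delta^{A\otimes A'},\Delta^{A\otimes A'})\cong F\otimes F'$ under which the comparison map for $B\otimes B'$ becomes $\rho\otimes\rho'$. By regularity of $B,B'$ the maps $\rho^n,(\rho')^n$ are isomorphisms for $n\geq1$, while in degree $0$ the maps $\rho^0,(\rho')^0$ send the orthogonal idempotents $\id_{L_i}$ to the orthogonal idempotents $\id_{\Delta_i}$ and are in particular injective.

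I then examine $(\rho\otimes\rho')^n=\bigoplus_{p+q=n}\rho^p\otimes(\rho')^q$ for $n\geq1$. Each block with $p,q\geq1$ is an isomorphism, so only $\rho^0\otimes(\rho')^n$ and $\rho^n\otimes(\rho')^0$ can fail; over a field, and since $(\rho')^n$ is invertible, $\rho^0\otimes(\rho')^n$ is an isomorphism exactly when $\rho^0$ is an isomorphism or $\Ext^n_{A'}(\Delta^{A'},\Delta^{A'})=0$, and symmetrically for the other block. Letting $n$ range over all positive integers, and using that the condition on $\rho^0$ is independent of $n$, I conclude that $B\otimes B'$ is regular if and only if
\[
(P\text{ or }Z')\quad\text{and}\quad(P'\text{ or }Z),
\]
where $P$ asserts that $\rho^0$ is an isomorphism (equivalently $\Hom_A(\Delta_i^A,\Delta_j^A)=0$ for $i\neq j$), $Z$ asserts $\Ext^n_A(\Delta^A,\Delta^A)=0$ for all $n\geq1$, and $P',Z'$ are the analogues for $A'$. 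Every block $\rho^p\otimes(\rho')^q$ is injective (the factors are injective for all $p,q\geq0$), so the comparison map is injective in every positive degree; hence being an epimorphism in degree $1$ is equivalent to being an isomorphism there, and (1)$\Leftrightarrow$(2) follows since the degree-$\geq2$ requirements of regularity and of homologicity coincide.

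It remains to translate $P$ and $Z$. I claim $P$ holds if and only if $A$ is directed. If $A$ is directed then $\Delta_i^A=L_i^A$ and distinct simples have no homomorphisms, so $P$ holds. Conversely, if some $\Delta_i^A$ is not simple, choose a simple submodule $L_k^A$ of its radical; then $k<_A i$, and composing the canonical surjection $\Delta_k^A\twoheadrightarrow L_k^A$ with the inclusion $L_k^A\hookrightarrow\Delta_i^A$ produces a nonzero homomorphism between distinct standard modules, so $P$ fails. For $Z$, once $\Ext^1_A(\Delta^A,\Delta^A)=0$ every standard filtration of an indecomposable projective $P_i^A$ splits, forcing $P_i^A=\Delta_i^A$; conversely $\Delta_i^A=P_i^A$ makes all standard modules projective, so $Z$ is equivalent to $\Delta_i^A=P_i^A$ for all $i$. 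By BGG reciprocity this last condition is equivalent to $\nabla_i^A=L_i^A$ for all $i$, that is, to $A^{\op}$ being directed.

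Finally I collect the cases. Writing $a,a^{\ast},b,b^{\ast}$ for the statements that $A$, $A^{\op}$, $A'$, $(A')^{\op}$ are directed, the criterion becomes $(a\text{ or }b^{\ast})\text{ and }(b\text{ or }a^{\ast})$, and distributing gives
\[
(a\wedge b)\ \vee\ (a^{\ast}\wedge b^{\ast})\ \vee\ (a\wedge a^{\ast})\ \vee\ (b\wedge b^{\ast}).
\]
An algebra is semisimple precisely when its standard modules are simultaneously simple and projective, so $A$ is semisimple if and only if $a\wedge a^{\ast}$ and $A'$ is semisimple if and only if $b\wedge b^{\ast}$; the four disjuncts are therefore exactly (a)--(d) of statement~(3), completing (1)$\Leftrightarrow$(3). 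I expect the structural identification of the third paragraph, and especially the equivalence between $P$ and directedness of $A$, to be the crux: it is what makes the purely homological criterion collapse onto the four listed cases. Setting up the Künneth compatibility of the comparison maps also needs care, though it is routine over a field.
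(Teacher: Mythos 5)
Your proof is correct and follows essentially the same route as the paper: Künneth identification of both Yoneda algebras, blockwise analysis of $\bigoplus_{p+q=n}\rho^p\otimes(\rho')^q$ with the only possible failures in the blocks involving $\rho^0$ or $(\rho')^0$, injectivity of all blocks giving (1)$\Leftrightarrow$(2), and translation of the degree-zero and vanishing conditions into directedness of $A,A',A^{\op},(A')^{\op}$ followed by distributing the conjunction of disjunctions. The only difference is cosmetic: you spell out the equivalences ``$\Hom_A(\Delta_i,\Delta_j)=0$ for $i\neq j$ iff $A$ is directed'' and ``$\Delta=P$ iff $A^{\op}$ is directed'' (via a simple submodule of $\rad\Delta_i$ and BGG reciprocity) where the paper asserts them more tersely.
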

\begin{proof}
	Let $P^{\cdot}(L^B), P^{\cdot}(L^{B'}), P^{\cdot}(\Delta^{A})$ and $P^{\cdot}(\Delta^{A'})$ be projective resolutions of the modules $L^B, L^{B'}, \Delta^A$ and $\Delta^{A'}$ respectively.\\ Then $P^{\cdot}(L^B\otimes L^{B'}):=P^{\cdot}(L^B)\otimes P^{\cdot}(L^{B'})$ and $P^{\cdot}(\Delta^A\otimes \Delta^{A'}):=P^{\cdot}(\Delta^{A})\otimes P^{\cdot}(\Delta^{A'})$ are projective resolutions of $L^B\otimes L^{B'}$, respectively $\Delta^A\otimes \Delta^{A'}$ by the Künneth formula, and we obtain
  canonical isomorphisms
	\begin{align*}
		\End_B(P^{\cdot}(L^B))\otimes \End_{B'}(P^{\cdot}(L^{B'}))\cong \End_{B\otimes B'}(P^{\cdot}(L^B\otimes L^{B'}))\\
		\End_A(P^{\cdot}(\Delta^A))\otimes \End_{A'}(P^{\cdot}(\Delta^{A'}))\cong \End_{A\otimes A'}(P^{\cdot}(\Delta^A\otimes \Delta^{A'}))\\
	\end{align*}
	of dg-algebras which, again by the Künneth fromula, give rise to isomorphisms (of vector spaces)
	\begin{align*}
		 \Ext^*_{B}(L^B,L^B)\otimes \Ext^*_{B'}(L^{B'}, L^{B'})\rightarrow &\Ext^*_{B\otimes B'}(L^B\otimes L^{B'}, L^B\otimes L^{B'}),\\ &[f]\otimes [g]\mapsto [f\otimes g] \\
		 \Ext^*_{A}(\Delta^A,\Delta^A)\otimes \Ext^*_{A'}(\Delta^{A'}, \Delta^{A'})\rightarrow &\Ext^*_{A\otimes A'}(\Delta^A\otimes \Delta^{A'}, \Delta^A\otimes \Delta^{A'}),\\ &[f]\otimes [g]\mapsto [f\otimes g]. \\
	\end{align*}
	Moreover, the diagram
\[\begin{tikzcd}[ampersand replacement=\&, column sep=huge]
	{\Ext^*_{B}(L^B,L^B)\otimes \Ext^*_{B'}(L^{B'}, L^{B'})} \& {\Ext^*_{B\otimes B'}(L^B\otimes L^{B'}, L^B\otimes L^{B'})} \\
	{\Ext^*_{A}(\Delta^A,\Delta^A)\otimes \Ext^*_{A'}(\Delta^{A'}, \Delta^{A'})} \& {\Ext^*_{A\otimes A'}(\Delta^A\otimes \Delta^{A'}, \Delta^A\otimes \Delta^{A'})}
	\arrow["{[f]\otimes [g]\mapsto [f\otimes g]}", from=1-1, to=1-2]
	\arrow["{[f]\otimes [g]\mapsto [\id_A\otimes f]\otimes [\id_{A'}\otimes g]}"', from=1-1, to=2-1]
	\arrow["{[h]\mapsto [\id_{A\otimes A'}\otimes h]}", from=1-2, to=2-2]
	\arrow["{[a]\otimes [b]\mapsto [a\otimes b]}"', from=2-1, to=2-2]
\end{tikzcd}\]
commutes. Thus the map
\begin{align*}
	\Ext^n_{B\otimes B'}(L^B\otimes L^{B'}, L^B\otimes L^{B'})\rightarrow &\Ext^n_{A\otimes A'}(\Delta^A\otimes \Delta^{A'}, \Delta^A\otimes \Delta^{A'}),\\
	 &[h]\mapsto [\id_{A\otimes A'}\otimes h]
\end{align*}
is an isomorphism respectively epimorphism if and only if the map
\begin{align*}
	\bigoplus_{i+j=n} \Ext^i_{B}(L^B,L^B)\otimes \Ext^j_{B'}(L^{B'}, L^{B'})\rightarrow &\bigoplus_{i+j=n} \Ext^i_{A}(\Delta^A,\Delta^A)\otimes \Ext^j_{A'}(\Delta^{A'}, \Delta^{A'}),\\ [f]\otimes [g]\mapsto [\id_A\otimes f]\otimes [\id_{A'}\otimes g]
\end{align*}
is an isomorphism respectively epimorphism, i.e., if for all $0\leq i\leq n$, $j:=n-i$, the map
\begin{align*}
\Ext^i_{B}(L^B,L^B)\otimes \Ext^j_{B'}(L^{B'}, L^{B'})\rightarrow &\Ext^i_{A}(\Delta^A,\Delta^A)\otimes \Ext^j_{A'}(\Delta^{A'}, \Delta^{A'}),\\
 &[f]\otimes [g]\mapsto [\id_A\otimes f]\otimes [\id_{A'}\otimes g]
\end{align*}
is an isomorphism respectively epimorphism.\\
By assumption, $B$ and $B'$ are regular, so for every $i, j>0$ the maps
\begin{align*}
	\Ext^i_{B}(L^B,L^B)\rightarrow \Ext^i_{A}(\Delta^A,\Delta^A),
	 [f]\mapsto [\id_A\otimes f]
\end{align*} 
and 
\begin{align*}
	  \Ext^j_{B'}(L^{B'}, L^{B'})\rightarrow  \Ext^j_{A'}(\Delta^{A'}, \Delta^{A'}),
		[g]\mapsto [\id_{A'}\otimes g]
	\end{align*}
	are isomorphisms. For $i=0$ the map
	\begin{align*}
		\Hom_B(L^B, L^B)\rightarrow \Hom_A(\Delta^A, \Delta^A), [f]\mapsto [\id_A\otimes f]
	\end{align*}
	corresponds to the canonical embedding
	$L\rightarrow \End_A(\Delta^A)$
	and for $j=0$ the map 
	\begin{align*}
		\Hom_{B'}(L^{B'}, L^{B'})\rightarrow \Hom_{A'}(\Delta^{A'}, \Delta^{A'}), [g]\mapsto [\id_{A'}\otimes g]
	\end{align*}
	corresponds to the canonical embedding $L'\rightarrow \End_{A'}(\Delta^{A'})$.
In particular, the map
\begin{align*}
	\Ext^i_{B}(L^B,L^B)\otimes \Ext^j_{B'}(L^{B'}, L^{B'})\rightarrow &\Ext^i_{A}(\Delta^A,\Delta^A)\otimes \Ext^j_{A'}(\Delta^{A'}, \Delta^{A'}),\\
	 &[f]\otimes [g]\mapsto [\id_A\otimes f]\otimes [\id_{A'}\otimes g]
	\end{align*}
	is an isomorphism for $i, j>0$; for $j=0$ it is an isomorphism if and only if it is an epimorphism if and only if
 $\Ext^n_A(\Delta^A, \Delta^A)=(0)$ or  $L'\cong \End_{A'}(\Delta^{A'})$; and for $i=0$ it is an isomorphism if and only if it is an epimorphism if and only if $\Ext^n_{A'}(\Delta^{A'}, \Delta^{A'})=(0)$ or
 $L\cong \End_A(\Delta^A)$.\\
 Thus $B\otimes B'$ is homological if and only if it is regular.
 Moreover, note that ${L'\cong \End_{A'}(\Delta^{A'})}$ if and only if $\Delta^{A'}\cong L'$ if and only if $A'$ is directed, and similarly 
 $L\cong \End_A(\Delta^A)$ if and only $A$ is directed.
 Thus, $B\otimes B'$ is regular if and only if 
 \begin{enumerate}
	\item $A'$ is directed or $\Ext^n_A(\Delta^A, \Delta^A)=(0)$ for all $n\geq 1$; and
	\item $A$ is directed or $\Ext^n_{A'}(\Delta^{A'}, \Delta^{A'})=(0)$ for all $n\geq 1$.
 \end{enumerate}
 Now, since the projective modules have a filtration by standard modules, $$\Ext^n_A(\Delta^A, \Delta^A)=(0)$$ for all $n\geq 1$ if and only if $\Delta^A$ is projective, and $\Ext^n_{A'}(\Delta^{A'}, \Delta^{A'})=(0)$ for all $n\geq 1$ if and only if $\Delta^{A'}$ is projective.
 Moreover, the standard modules being projective means exactly that the opposite algebra is directed.
 Thus 
 $B\otimes B'$ is regular if and only if 
 \begin{enumerate}
	\item $A'$ is directed or $A^{\op}$ is directed; and
	\item $A$ is directed or $(A')^{\op}$ is directed.
 \end{enumerate}
 This conjunction of disjunctions can be written as a disjunction of conjunctions, giving us the following cases:
 \begin{enumerate}
	\item $A'$ is directed and $A$ is directed; or
    \item $A^{\op}$ is directed and $(A')^{\op}$ is directed; or
    \item $A^{\op}$ is directed and $A$ is directed; or
    \item  $A'$ is directed and $(A')^{\op}$ is directed.
 \end{enumerate}
 Finally, note that both $A$ and $A^{\op}$ are directed if and only if $A$ is semisimple; and both $A'$ and $(A')^{\op}$ are directed if and only if $A'$ is semisimple. Thus, these correspond exactly to the cases (a)--(d) in the statement of the proposition.
\end{proof}
If we don't suppose $B$ and $B'$ to be regular a priori, we obtain the following similar statement:
\begin{corollary}
    Suppose $B$ and $B'$ are exact Borel subalgebras of $A$ and $A'$ respectively. Then the following statements are equivalent: 
		\begin{enumerate}
			\item $B\otimes B'$ is regular.
			\item $B\otimes B'$ is homological and $B$ and $B'$ are regular.
            \item One of the following statements holds:
            \begin{enumerate}
			\item $A$ and $A'$ are directed.
			\item $A^{\op}$ and $(A')^{\op}$ are directed.
			\item $A$ is semisimple and $B'$ is regular.
            \item $A'$ is semisimple and $B$ is regular.
            \end{enumerate}
		\end{enumerate}
\end{corollary}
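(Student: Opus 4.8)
The plan is to deduce the corollary from Proposition \ref{proposition_regular} by stripping away the standing hypothesis that $B$ and $B'$ are regular and recovering it case by case. The engine is the decomposition of the comparison map established in the proof of Proposition \ref{proposition_regular}: for every $n$ the map $[h]\mapsto[\id_{A\otimes A'}\otimes h]$ in degree $n$ is, via the Künneth formula and the commuting square there, the direct sum over $i+j=n$ of the maps $\Phi^i_B\otimes\Phi^j_{B'}$, where $\Phi^i_B\colon\Ext^i_B(L^B,L^B)\to\Ext^i_A(\Delta^A,\Delta^A)$ and $\Phi^j_{B'}\colon\Ext^j_{B'}(L^{B'},L^{B'})\to\Ext^j_{A'}(\Delta^{A'},\Delta^{A'})$ are the comparison maps of $B$ and $B'$. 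I will use repeatedly that a finite direct sum of linear maps is an isomorphism precisely when each summand is, and that if $g$ is an injective linear map with nonzero domain and codomain, then any tensor product $f\otimes g$ which is an isomorphism forces $f$ to be an isomorphism.

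For (1)$\Rightarrow$(2), suppose $B\otimes B'$ is regular, so every $\Phi^i_B\otimes\Phi^j_{B'}$ with $i+j\geq1$ is an isomorphism. The map $\Phi^0_{B'}\colon\Hom_{B'}(L^{B'},L^{B'})\to\End_{A'}(\Delta^{A'})$ is the canonical embedding, hence injective with nonzero domain and codomain; taking $j=0$ then shows that $\Phi^i_B$ is an isomorphism for every $i\geq1$, so $B$ is regular, and taking $i=0$ shows $B'$ is regular. Since an isomorphism is in particular an epimorphism, $B\otimes B'$ is moreover homological, which gives (2).

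For (2)$\Rightarrow$(3), once $B$ and $B'$ are regular we are in the situation of Proposition \ref{proposition_regular}, whose equivalence of (2) and (3) turns the homological property of $B\otimes B'$ into one of the four alternatives: $A,A'$ directed; $A^{\op},(A')^{\op}$ directed; $A$ semisimple; $A'$ semisimple. Recording the standing regularity of $B$ and $B'$ refines the last two alternatives into (3)(c) and (3)(d) of the corollary, while the first two are (3)(a) and (3)(b) verbatim.

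The heart of the argument, and the main obstacle, is (3)$\Rightarrow$(1): in cases (3)(a) and (3)(b) regularity of $B$ and $B'$ is not assumed and must be produced from (co)directedness of $A$ and $A'$. I would isolate two lemmas, both proved after reducing to basic algebras. First, if $A$ is directed then every exact Borel subalgebra $B$ is regular: here $\Delta^A_{\phi(j)}=A\otimes_B L^B_j\cong L^A_{\phi(j)}$ is simple, forcing $\Res_B L^A_{\phi(j)}\cong L^B_j$, and since $A\otimes_B P^{\bullet}(L^B)$ is a projective resolution of $\Delta^A$ (induction is exact and preserves projectives), the adjunction $\Hom_A(A\otimes_B -,-)\cong\Hom_B(-,\Res_B -)$ identifies $\Phi^i_B$ with the identity on $\Ext^i_B(L^B,L^B)$. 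Second, if $A^{\op}$ is directed then every exact Borel subalgebra $B$ is semisimple: here the standard modules are projective with simple top, so $\Delta^A_l\cong P^A_l$ and $A\cong\bigoplus_l\Delta^A_l\cong A\otimes_B(B/\rad B)\cong A/A\rad(B)$, whence comparing dimensions gives $A\rad(B)=0$ and so $\rad(B)=0$. Granting these lemmas the case check is immediate: in (a) the first lemma makes $B,B'$ regular; in (b) the second makes them semisimple, hence regular; in (c) and (d) semisimplicity of $A$ (resp.\ $A'$) makes $B$ (resp.\ $B'$) regular while the other is regular by hypothesis. In every case $B$ and $B'$ are regular and one of the four alternatives of Proposition \ref{proposition_regular} holds, so that proposition yields that $B\otimes B'$ is regular.
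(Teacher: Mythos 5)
Your proof of $(1)\Rightarrow(2)$ is essentially the paper's: both decompose the degree-$n$ comparison map of $B\otimes B'$ via the K\"unneth isomorphisms into the direct sum of the maps $\Phi^i_B\otimes\Phi^j_{B'}$ and then cancel the injective degree-zero factor to conclude that $B$ and $B'$ are regular (you are in fact a little more careful than the paper, which invokes ``a tensor product of two maps is an isomorphism iff both are'' without the nonvanishing hypotheses you make explicit), and $(2)\Rightarrow(3)$ is handled identically via Proposition~\ref{proposition_regular}. Where you genuinely diverge is $(3)\Rightarrow(1)$: the paper dismisses this as clear, while you correctly point out that in cases (a) and (b) regularity of $B$ and $B'$ is not among the hypotheses and must be produced, and you supply the two lemmas (if $A$ is directed then every exact Borel subalgebra is regular; if $A^{\op}$ is directed then every exact Borel subalgebra is semisimple, hence regular because the standard modules are then projective and both Ext-groups vanish) that make the paper's ``clear'' honest. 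The one soft spot in your argument is the phrase ``after reducing to basic algebras'': exact Borel subalgebras do not transfer along Morita equivalences in any obvious way (the paper's long example after Theorem~\ref{thm_tensor} turns on exactly this subtlety), so that reduction is not free. What your two lemmas actually need is the equality $\dim L^B_j=\dim L^A_{\phi(j)}$ --- equivalently, surjectivity of the unit $L^B_j\to\Res_B\Delta^A_{\phi(j)}$ when $\Delta^A_{\phi(j)}$ is simple, and the matching of multiplicities between $A\cong\bigoplus_l\bigl(P^A_l\bigr)^{\dim L^A_l}$ and $A\otimes_B(B/\rad B)\cong\bigoplus_j\bigl(\Delta^A_{\phi(j)}\bigr)^{\dim L^B_j}$ --- which is immediate for basic $A$ but should be argued in general. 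Since the paper offers no argument whatsoever for this implication, this is a minor blemish in the one place where you go beyond the source rather than a flaw in the shared core of the proof.
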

\begin{proof}
By Proposition \ref{proposition_regular}, it follows that $(2)\Rightarrow (3)$ and $(2)\Rightarrow (1)$. Moreover, it is clear that $(3)\Rightarrow (1)$. Hence it remains to be shown that $(1)\Rightarrow (2)$, i.e. we need to show that if $B\otimes B'$ is regular, then $B$ and $B'$ are regular.
As we have seen in the proof of Proposition \ref{proposition_regular}, if $B\otimes B'$ is regular, then we have isomorphisms
    \begin{align*}
\Ext^i_{B}(L^B,L^B)\otimes \Ext^j_{B'}(L^{B'}, L^{B'})\rightarrow &\Ext^i_{A}(\Delta^A,\Delta^A)\otimes \Ext^j_{A'}(\Delta^{A'}, \Delta^{A'}),\\
 &[f]\otimes [g]\mapsto [\id_A\otimes f]\otimes [\id_{A'}\otimes g]
\end{align*}  
for all $0\leq i, j\leq n$ with $i+j\geq 1$.
In particular, we have isomorphisms
 \begin{align*}
\End_B(L^B)\otimes \Ext^j_{B'}(L^{B'}, L^{B'})\rightarrow &\End_{A}(\Delta^A)\otimes \Ext^j_{A'}(\Delta^{A'}, \Delta^{A'}),\\
 &f\otimes [g]\mapsto (\id_A\otimes f)\otimes [\id_{A'}\otimes g]
\end{align*}
and 
\begin{align*}
\Ext^i_{B}(L^B,L^B)\otimes \End_{B'}(L^{B'})\rightarrow &\Ext^i_{A}(\Delta^A,\Delta^A)\otimes \End_{A'}(\Delta^{A'}),\\
 &[f]\otimes g \mapsto [\id_A\otimes f]\otimes (\id_{A'}\otimes g)
\end{align*} 
for all $i, j\geq i$. 
Since the tensor product over $\field$ of two maps is an isomorphism if and only if both maps are isomorphisms, this implies in particular that
\begin{align*}
\Ext^j_{B'}(L^{B'}, L^{B'})\rightarrow \Ext^j_{A'}(\Delta^{A'}, \Delta^{A'}),
[g]\mapsto [\id_{A'}\otimes g],  \textup{ and }\\
\Ext^i_{B}(L^B,L^B)\rightarrow \Ext^i_{A}(\Delta^A,\Delta^A)\otimes \End_{A'}(\Delta^{A'}),
 [f]\mapsto [\id_A\otimes f].
\end{align*} 
are isomorphisms for all $i, j\geq 1$, so that $B$ and $B'$ are regular. 
\end{proof}
\section{Species of Quasi-Hereditary Algebras and their Borel subalgebras}
In \cite{triangular_matrix}, Zhu investigated under which conditions triangular matrix rings of quasi-hereditary algebras are quasi-hereditary. Inspired by this, we consider generalized species of quasi-hereditary algebras. We show that these are quasi-hereditary, and investigate how to construct exact Borel subalgebras of the species given an exact Borel subalgebra for each of the involved quasi-hereditary algebras.
\subsection{Species of Quasi-Hereditary Algebras}
Species and $\field$-species were first introduced by Gabriel in \cite{Gabriel}, and later studied in depth by, among others, Dlab and Ringel \cite{DR2, DR3, DR4, Ringel}. In their original form, categories of representations of $\field$-species describe, up to equivalence, all module categories of finite-dimensional hereditary algebras over $\field$, where $\field$ is a perfect field. This generalizes the fact that basic hereditary algebras over algebraically closed fields are isomorphic to path algebras of quivers.\\
Later, several generalisations of $\field$-species were proposed and studied, see for example \cite{Lemay, Li, julian_basic}.
In this article, we study generalized species similar to, but slightly more general than, those defined in \cite{julian_basic}.
Throughout, let $Q=(V_Q, E_Q)$ be an acyclic quiver with vertices $V_Q$ labelled by $1, \dots, n$. For a path $p$ in $Q$ we denote by $s(p)$ the start vertex and by $t(p)$ the terminal vertex. Moreover, we denote by $\max(p)$ the maximal vertex with respect to the natural order that $p$ passes through.
\begin{definition}\cite{julian_basic}
 A species on $Q$ is a collection $((A_i)_{i\in V_Q}, (M_{\alpha})_{\alpha\in E_Q})$
 where $A_i$ is a finite-dimensional $\field$-algebra for every $i \in V_{Q}$, and $M_{\alpha}$ is an $A_{t(\alpha)}$-$A_{s(\alpha)}$-bimodule.\\
 For every path $p=\alpha_k\dots\alpha_1$ in $Q$ let $$M_p:=M_{\alpha_k}\otimes_{A_{s(\alpha_k)}}M_{\alpha_{k-1}}\otimes_{A_{s(\alpha_{k-1})}} \dots \otimes_{A_{s(\alpha_2)}}M_{\alpha_1}$$
where for the trivial path $e_i$ at vertex $i$ we set $M_{e_i}:=A_i$.\\
We denote by $T((A_i)_{i\in V_Q}, (M_{\alpha})_{\alpha\in E_Q})$ the associated tensor algebra.
 \begin{align*}
    A:=\bigoplus_{p \textup{ path in }Q}M_p
\end{align*}
with multiplication
\begin{align*}
    m_p\cdot m_q:=m_p\otimes_{A_i}m_q\in M_{pq}
\end{align*}
for $p, q$ non-trivial paths in $Q$ with $s(p)=i=t(q)$, $m_p\in M_p$, $m_q\in M_q$,
and 
\begin{align*}
    a_i\cdot m_q:=a_im_q\in M_q,\\
    m_p\cdot a_i:=m_pa_i\in M_p
\end{align*}
for $p, q$ paths in $Q$ with $s(p)=i=t(q)$, $m_p\in M_p$, $m_q\in M_q$, and $a_i\in A_i$, and 
\begin{align*}
    m_p\cdot m_q:=0
\end{align*}
for $p, q$ paths in $Q$ with $s(p) \neq t(q)$.\\
Moreover, for $1\leq i\leq n$, let $A_i^A$ be the $A$-module given by
    \begin{align*}
        \bigoplus_{p} M_p\otimes A_i\rightarrow A_i\\
        m\otimes a\mapsto 0 \textup{ for }m\in M_p\neq M_{e_i}\\
        a\otimes a'\mapsto aa' \textup{ for }a\in M_{e_i}=A_i.\\
    \end{align*}
\end{definition}
Suppose  $A=T((A_i)_{1\leq i\leq n}, (M_{\alpha})_{\alpha\in E_Q})$ is the tensor algebra of a species on $Q$ and for all $1\leq i\leq n$,  $e^i_1,\dots, e^i_{m_i}$ is a set of primitive orthogonal idempotents in $A_i$. Then, for $1\leq i\leq n$ and $1\leq j\leq n_i$, we denote by $e_{ij}$ the idempotent $e^i_j\in A_i=M_{e_i}\subseteq A$.\\
The following is a basic results about species analogous to \cite[Theorem 3.3]{icpn}, see also \cite{julian_basic}:
\begin{lemma}\label{lemma_radical_species}
Let $((A_i)_{i\in V_Q}, (M_\alpha)_{\alpha\in E_Q})$ be a species on $Q$ and let $$A:=T((A_i)_{i\in V_Q}, (M_\alpha)_{\alpha\in E_Q})$$ be the associated tensor algebra. For $1\leq i\leq n$ let $e_{i1},\dots, e_{im_i}$ be a complete set of primitive orthogonal idempotents in $A_i$. 
    Then the set $\{e_{ij}|1\leq i\leq n, 1\leq j\leq m_i\}$ is a complete set of primitive orthogonal idempotents in $A$. Moreover, the radical of $A$ is given by $$\rad(A)=\bigoplus_{i=1}^n \rad(A_i)\oplus \bigoplus_{p\neq e_i\textup{ for }1\leq i\leq n}M_p.$$
    In particular, there is a bijection
    \begin{align*}
        \bigsqcup \Sim(A_i)\rightarrow \Sim(A)\\
        L\mapsto A_i^A\otimes_{A_i}L_i \textup{ for }L\in \Sim(A_i).
    \end{align*}
\end{lemma}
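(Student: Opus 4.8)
The plan is to prove Lemma~\ref{lemma_radical_species} in three stages: first identify the primitive orthogonal idempotents, then compute the radical explicitly, and finally deduce the bijection on simple modules. Throughout I would exploit the grading of $A=\bigoplus_{p}M_p$ by path length, together with the fact that $Q$ is acyclic so that paths of sufficiently large length vanish and the ``positive-length'' part $\bigoplus_{p\neq e_i}M_p$ is a nilpotent two-sided ideal.

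First I would verify that $\{e_{ij}\}$ is a complete set of primitive orthogonal idempotents. Completeness and orthogonality are essentially formal: since each $e^i_1,\dots,e^i_{m_i}$ is a complete orthogonal set in $A_i$ and the $A_i$ sit in distinct summands $M_{e_i}$ of $A$ with $1=\sum_i 1_{A_i}$, the products $e_{ij}e_{kl}$ with $(i,j)\neq(k,l)$ vanish because they either lie in $A_iA_k=0$ for $i\neq k$ (distinct trivial paths multiply to zero) or are orthogonal idempotents within a single $A_i$. For primitivity I would argue that $e_{ij}Ae_{ij}$ is a local ring: writing $N:=\bigoplus_{p\neq e_i}M_p$ for the positive-length ideal, one has $e_{ij}Ae_{ij}=e^i_jA_ie^i_j\oplus e_{ij}Ne_{ij}$, where the first summand is local (as $e^i_j$ is primitive in $A_i$) and the second is nilpotent; hence the whole ring is local and $e_{ij}$ is primitive.

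Next I would compute the radical. The natural candidate is $R:=\bigoplus_i\rad(A_i)\oplus N$. I would show $R$ is a nilpotent ideal with semisimple quotient, which characterizes $\rad(A)$. That $R$ is a two-sided ideal follows from the bimodule structure of the $M_\alpha$ (each $\rad(A_i)$ absorbs multiplication on the appropriate side, and $N$ is already an ideal since multiplying any positive-length path by anything stays in positive length or dies). For nilpotency I would use acyclicity: $N^k$ lands in paths of length $\geq k$, which vanish once $k$ exceeds the longest path in $Q$, while the $\rad(A_i)$ pieces are nilpotent in $A_i$; combining these gives $R^m=0$ for large $m$. Finally $A/R\cong\bigoplus_i A_i/\rad(A_i)$ is a product of semisimple algebras, hence semisimple, so $R=\rad(A)$.

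For the bijection on simple modules I would observe that simple $A$-modules factor through $A/\rad(A)\cong\bigoplus_i A_i/\rad(A_i)$, and the simple modules of a direct product of algebras are exactly the simples of the factors, inflated along the projections. Identifying the $i$-th factor's simples with $\Sim(A_i)$ and recognizing that inflation along $A\twoheadrightarrow A_i/\rad(A_i)$ sends $L\in\Sim(A_i)$ to $A_i^A\otimes_{A_i}L$ yields the claimed bijection $\bigsqcup_i\Sim(A_i)\to\Sim(A)$. I expect the main obstacle to be the primitivity argument in the first step: one must be careful that the ``diagonal'' corner ring $e_{ij}Ae_{ij}$ really decomposes as claimed and that its nilpotent part is genuinely an ideal making the ring local, rather than merely checking idempotent relations formally. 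The radical computation, once acyclicity is invoked for nilpotency, is the conceptual heart but is routine given the grading.
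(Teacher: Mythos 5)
Your proof is correct, and its overall shape (the candidate $R=\bigoplus_i\rad(A_i)\oplus\bigoplus_{p\neq e_i}M_p$, acyclicity forcing nilpotence of the positive-length part) matches the paper's. The details differ in two places, both to your advantage. For primitivity the paper argues directly: writing $e_{ij}=e'+e''$ and expanding $e'=e_{ij}e'e_{ij}$ componentwise over the $M_p$, acyclicity forces $e',e''\in A_i$, whence primitivity in $A_i$ finishes it; your route via locality of the corner ring $e_{ij}Ae_{ij}$ is equivalent, and you could even note that acyclicity makes $e_{ij}Ne_{ij}=0$ outright (no nontrivial path from $i$ to $i$), so the corner ring \emph{is} $e^i_jA_ie^i_j$ and no nilpotent-ideal argument is needed. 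For the radical, the paper checks that every element of $R$ is nilpotent to get $R\subseteq\rad(A)$ and then compares dimensions against a decomposition $A\cong\bigoplus_{i,j}\field e_{ij}\oplus\rad(A)$, which implicitly assumes the $A_i$ are basic; your characterization of $\rad(A)$ as a nilpotent two-sided ideal with semisimple quotient $A/R\cong\bigoplus_i A_i/\rad(A_i)$ avoids that assumption and is the more robust argument. The identification of the simples via inflation along $A\twoheadrightarrow A/\rad(A)$, matched with $A_i^A\otimes_{A_i}-$, is exactly what is needed for the final bijection.
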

From now on, suppose $(A_i, \leq_i)$ is quasi-hereditary for every $1\leq i\leq n$, where $\leq_i$ is a partial order on the isomorphism classes of simple $A_i$-modules. Let us denote by $\lesssim_i$  the pre-order on $\{e_{1}^i, \dots e_{m_i}^i\}$ corresponding to $\leq_i$, and for $1\leq j\leq m_i$ let us write
\begin{align*}
    j\lesssim_i k\Leftrightarrow e_{j}^i\lesssim_i e_{k}^i.
\end{align*}
Suppose $((A_i)_{i\in V_Q}, (M_\alpha)_{\alpha\in E_Q})$ is a species on $Q$ and denote by $$A:=T((A_i)_{i\in V_Q}, (M_\alpha)_{\alpha\in E_Q})$$ the associated tensor algebra.\\
We define a partial order $\leq_A$ on $\Sim(A)$ via
\begin{align*}
    A_i^A\otimes_{A_i} L \leq A_j^A\otimes_{A_j} L' \Leftrightarrow ((i<i') \textup{ or }(i=i' \textup{ and }L\leq L')).
\end{align*}
Then the corresponding pre-order $\lesssim_A$ on the set $\{e_{ij}|1\leq i\leq n, 1\leq j\leq m_i\}$, which is a complete set of primitive orthogonal idempotents for $A$, is given by
\begin{align*}
    e_{ij}\lesssim_A e_{kl}\Leftrightarrow (i<k) \textup{ or} (i=k\textup{ and }j\lesssim_i l).
\end{align*}
In the next lemma, we describe the standard modules of $A$ with respect to this partial order.
\begin{lemma}\label{lemma_std}
     The standard modules of $A$ are given by 
     \begin{align*}
    \Delta_{ij}\cong \bigoplus_{p \notin \field Q\sum_{i'>i}e_{i'}\field Q, s(p)=i}M_p\otimes_{A_i} \Delta_j^{A_i}
     \end{align*}
     where the multiplication is given by zero for $m_{\alpha}\in M_{\alpha}$ with $t(\alpha)>i$, and the usual multiplication in $A$ otherwise.
 \end{lemma}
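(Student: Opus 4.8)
The plan is to compute $\Delta_{ij}$ straight from Definition \ref{definition_qh}, as the quotient of $P(L_{ij}) = Ae_{ij}$ by the trace of the indecomposable projectives $P(L_{kl})$ with $L_{kl} \nleq_A L_{ij}$, and to pin down that trace explicitly. First I would record the projective itself: since $e_{ij} = e^i_j$ lies in $A_i = M_{e_i}$, right multiplication by it annihilates $M_p$ unless $s(p)=i$, and for $s(p)=i$ one has $M_p e^i_j \cong M_p \otimes_{A_i} A_ie^i_j = M_p \otimes_{A_i} P^{A_i}_j$ because $A_ie^i_j$ is a direct summand of ${}_{A_i}A_i$. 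Hence
\[
P(L_{ij}) = Ae_{ij} \cong \bigoplus_{p:\, s(p)=i} M_p \otimes_{A_i} P^{A_i}_j,
\]
with the left $A$-action prepending arrows. Writing $K^{A_i}_j := \ker\!\big(P^{A_i}_j \twoheadrightarrow \Delta^{A_i}_j\big)$, which by Definition \ref{definition_qh} is exactly the trace in $P^{A_i}_j$ of the projectives $P^{A_i}_l$ with $l\not\lesssim_i j$, my candidate for the trace is $N := W + V$, where $W := \bigoplus_{s(p)=i,\ \max(p)>i} M_p \otimes_{A_i} P^{A_i}_j$ and $V := \bigoplus_{s(p)=i,\ \max(p)\leq i} M_p\cdot K^{A_i}_j$. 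Using that prepending an arrow never decreases $\max$, and that an arrow $\alpha$ with $t(\alpha)>i$ sends a $K^{A_i}_j$-part into $W$, one checks that $N$ is an $A$-submodule.

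I would then show that the trace is contained in $N$ by means of the identification $\Hom_A(Ae_{kl},Ae_{ij})\cong e_{kl}Ae_{ij}$, under which a homomorphism $f$ is right multiplication by $c=f(e_{kl})$ and $\im f = Ac$. Since $L_{kl}\nleq_A L_{ij}$ means either $k>i$, or $k=i$ and $l\not\lesssim_i j$, there are two cases. If $k>i$, every nonzero component of $c$ lies in some $M_q$ with $q\colon i\to k$, so $\max(q)\geq k>i$, and as $\max(sq)\geq\max(q)$ for composable $s$ the whole of $Ac$ lands in $W$. If $k=i$, acyclicity of $Q$ forces the only contributing path to be $e_i$, so $c$ corresponds to a map $P^{A_i}_l\to P^{A_i}_j$ with $l\not\lesssim_i j$; its image lies in $K^{A_i}_j$, and reading off degrees as before gives $Ac\subseteq V+W$.

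For the reverse inclusion the part $V$ is easy: for each generator $d\in e^i_lA_ie^i_j$ of $K^{A_i}_j$ with $l\not\lesssim_i j$, the homomorphism $Ae_{il}\to Ae_{ij}$ determined by $d$ has $p$-degree component $M_p\cdot d$, and summing over $l$ and the generators $d$ recovers $M_p\cdot K^{A_i}_j$ in every low degree $p$. Showing $W$ lies in the trace is the crux, and I expect it to be the main obstacle. Given a high path $p$, let $i'>i$ be the first vertex $>i$ along $p$ and factor $p=p_2p_1$ with $p_1\colon i\to i'$ and $p_2\colon i'\to t(p)$, so that $M_p\otimes_{A_i}P^{A_i}_j = M_{p_2}\otimes_{A_{i'}}Y$ with $Y:=M_{p_1}\otimes_{A_i}P^{A_i}_j$, a left $A_{i'}$-module occupying the $p_1$-degree of $Ae_{ij}$. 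After decomposing $Y=\bigoplus_{l'}e^{i'}_{l'}Y$, each $y\in e^{i'}_{l'}Y$ yields a homomorphism $Ae_{i'l'}\to Ae_{ij}$ with $e_{i'l'}\mapsto y$, whose image contains every $m_{p_2}\otimes y$ with $m_{p_2}\in M_{p_2}$ inside the $p$-degree $M_{p_2}\otimes_{A_{i'}}Y$. Since such simple tensors span the tensor product, letting $y$ and $m_{p_2}$ vary shows the whole $p$-component lies in the trace; the care required is exactly this isotypic decomposition, which guarantees that each $y$ generates a map out of an \emph{indecomposable} projective $Ae_{i'l'}$ with $i'>i$.

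Combining the two inclusions gives that $N$ equals the trace, so
\[
\Delta_{ij} = P(L_{ij})/N \cong \bigoplus_{s(p)=i,\ \max(p)\leq i} \big(M_p\otimes_{A_i}P^{A_i}_j\big)\big/\big(M_p\cdot K^{A_i}_j\big).
\]
By right-exactness of $M_p\otimes_{A_i}-$ applied to the presentation $K^{A_i}_j\hookrightarrow P^{A_i}_j\twoheadrightarrow\Delta^{A_i}_j$, each low-degree summand is $M_p\otimes_{A_i}\Delta^{A_i}_j$, while the condition $\max(p)\leq i$ is precisely that $p$ avoids every vertex $i'>i$, that is $p\notin\field Q\sum_{i'>i}e_{i'}\field Q$. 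Finally, on this quotient an arrow $\alpha$ with $t(\alpha)>i$ pushes any summand into $W=0$ and so acts as zero, whereas every other arrow acts as in $A$; this matches the module structure asserted in the statement and completes the identification.
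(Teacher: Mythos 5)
Your proof is correct and follows essentially the same route as the paper: decompose $Ae_{ij}$ over the paths starting at $i$, identify the rejection submodule as the components of paths passing through a vertex $>i$ together with $M_p\cdot K_j^{A_i}$ in the remaining degrees, and apply right-exactness of $M_p\otimes_{A_i}-$ to land on $M_p\otimes_{A_i}\Delta_j^{A_i}$. The paper compresses the identification of the trace into the idempotent formula $\Delta_{ij}=Ae_{ij}/(A\sum_{(k,l)>(i,j)}e_{kl}Ae_{ij})$ together with the observation that quotienting by $A\sum_{i'>i}1_{A_{i'}}A$ kills exactly the high-path components; your two-inclusion verification simply spells out what the paper leaves implicit.
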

 \begin{proof}
 First, note that
 \begin{align*}
     A1_{A_i}/A\sum_{i'>i}1_{A_{i'}}A1_{A_i}\cong \bigoplus_{p \notin \field Q\sum_{i'>i}e_{i'}\field Q, s(p)=i}M_p.
 \end{align*}
  where the multiplication is given by zero for $m_{\alpha}\in M_{\alpha}$ with $t(\alpha)>i$.
 By definition
     \begin{align*}
    \Delta_{ij}=Ae_{ij}/(A\sum_{(k,l)>(i,j)}e_{kl}Ae_{ij}).
     \end{align*}
     Moreover, since $Q$ has no oriented cycles, $1_{A_i}M_p=0$ for any path $p$ in $Q$ with $s(p)=i$, so that, in particular, $e_{ij'}M_p=0$ for $j<j'\leq j$. Therefore,
      \begin{align*}
    \Delta_{ij}&\cong (\bigoplus_{p \notin \field Q\sum_{i'>i}e_{i'}\field Q, s(p)=i}M_p)e_{ij}/(\bigoplus_{p \notin \field Q\sum_{i'>i}e_{i'}\field Q, s(p)=i}M_p)\sum_{j'>j}e_{ij'}A_ie_{ij}\\
    &\cong \bigoplus_{p \notin \field Q\sum_{i'>i}e_{i'}\field Q, s(p)=i}M_pe_{ij}/(M_p\sum_{j'>j}e_{ij'}A_ie_{ij})\\
    &\cong  \bigoplus_{p \notin \field Q\sum_{i'>i}e_{i'}\field Q, s(p)=i}M_p\otimes_{A_i}\Delta_j^{A_i},
     \end{align*}
     where the last isomorphism is given componentwise by
     \begin{align*}
         M_pe_{ij}/(M_p\sum_{j'>j}e_{ij'}A_ie_{ij})&\rightarrow M_p\otimes_{A_i}\Delta_j^{A_i}, \\me_{ij}+M_p\sum_{j'>j}e_{ij'}A_ie_{ij}&\mapsto m\otimes (e_{ij}+A_i\sum_{j'>j}e_{ij'}A_ie_{ij})
     \end{align*}
     with inverse
      \begin{align*}
        M_p\otimes_{A_i}\Delta_j^{A_i}&\rightarrow  M_pe_{ij}/(M_p\sum_{j'>j}e_{ij'}A_ie_{ij}),\\ m\otimes (ae_{ij}+A_i\sum_{j'>j}e_{ij'}A_ie_{ij})&\mapsto mae_{ij}+M_p\sum_{j'>j}e_{ij'}A_ie_{ij}.\qedhere
     \end{align*}
 \end{proof}
 \begin{example}\label{example_std}
 \begin{enumerate}
     \item Suppose $Q$ is directed, meaning that whenever there is an arrow $\alpha: i\rightarrow j$, then $i<j$. Then, the standard modules of $A$ are just the standard modules of $A_i$ for every $1\leq i\leq n$, viewed as $A$-modules: 
     \begin{align*}
    \Delta_{ij}&\cong \bigoplus_{p \notin \field Q\sum_{i'>i}e_{i'}\field Q, s(p)=i}M_p\otimes_{A_i}\Delta_j^{A_i}\\
    &=A_i^A\otimes_{A_i}\Delta_j^{A_i}\\
    &\cong \Delta_j^{A_i}.
    \end{align*}
    \item Suppose $Q$ is the quiver 
     \begin{center}
\begin{tikzcd}[ampersand replacement=\&]
	2 \& 3 \& 1
	\arrow["\alpha", from=1-1, to=1-2]
	\arrow["\gamma", curve={height=-18pt}, from=1-1, to=1-3]
	\arrow["\beta", from=1-2, to=1-3]
\end{tikzcd} 
  \end{center}
  and let $A_1=A_2=A_3=\field Q'$ with the natural order, where $Q'$ is the $\textup{A}_2$ quiver 
\[\begin{tikzcd}[ampersand replacement=\&]
	1 \& 2
	\arrow[from=1-1, to=1-2]
\end{tikzcd}\]
and let $M_\alpha=M_\beta=M_\gamma=\field Q'$ as $\field Q'$-$\field Q'$-bimodule.
Then 
 \begin{align*}
    \Delta_{21}
    &\cong \bigoplus_{p \notin \field Q\sum_{i'>2}e_{i'}\field Q, s(p)=2}M_p\otimes_{A_2}\Delta_1^{A_2}\\
    &=A_2\otimes_{A_2}\Delta_1^{A_2} \oplus M_\gamma \otimes_{A_2}\Delta_1^{A_2}\\
    &=\begin{pmatrix}
        0 & M_\gamma \otimes_{A_2}L_1^{A_2} & 0\\
        0 & L_1^{A_2} & 0\\
        0 & 0 & 0
    \end{pmatrix}=\begin{pmatrix}
        0 & L_1^{Q'} & 0\\
        0 & L_1^{Q'} & 0\\
        0 & 0 & 0
    \end{pmatrix}
     \end{align*}
     where the above matrix rings are equipped with the $A$-multiplication given by identifying $A$ with the matrix ring
     $$\begin{pmatrix}
        A_1 & M_\gamma\oplus M_{\beta}\otimes M_{\alpha}  & M_{\beta}\\
        0 & A_2 & 0\\
        0 & M_{\alpha} & A_3
    \end{pmatrix}$$
    and considering the module structure given by matrix multiplication, i.e.
    \begin{align*}
    \begin{pmatrix}
        a_1 & m_\gamma+ m'_{\beta}\otimes m'_{\alpha}  & m_{\beta}\\
        0 & a_2 & 0\\
        0 & m_{\alpha} & a_3
    \end{pmatrix}\cdot 
        \begin{pmatrix}
        0 & x & 0\\
        0 & y & 0\\
        0 & 0 & 0
    \end{pmatrix}= \begin{pmatrix}
        0 & a_1x+m_{\gamma}y & 0\\
        0 & a_2y & 0\\
        0 & 0 & 0
    \end{pmatrix}.
    \end{align*}
 \end{enumerate}
 \end{example}
 The following theorem is in some ways a generalization and in some ways a specialization of \cite[Theorem 3.1]{triangular_matrix}. On the one hand, a triangular matrix ring is a species on an $\textup{A}_2$-quiver, on the other hand, we do not obtain an if and only if statement, and the conditions on the modules $M_\alpha$ here are stronger.
\begin{theorem}\label{thm_qh_species}
Suppose that for all arrows $\alpha$ in $Q$, $M_{\alpha}$ is projective as a right $A_{s(\alpha)}$-module whenever there is a path $p$ with $\max(p)=s(p)$ such that $\alpha$ appears in $p$, and projective as a left $A_{t(\alpha)}$-module whenever there is a path $p$ with $\max(p)= t(p)$ such that $\alpha$ appears in $p$.
  Then  $A$ is quasi-hereditary.
\end{theorem}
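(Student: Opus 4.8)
The plan is to verify quasi-heredity through criterion (2) of Definition \ref{definition_qh}: it suffices to show that $\End_A(\Delta_{ij}) \cong \field$ for every standard module, and that the regular module ${}_AA$, equivalently each indecomposable projective $P_{ij} = Ae_{ij}$, lies in $\filt(\Delta)$. Throughout I would use the description of $\Delta_{ij}$ from Lemma \ref{lemma_std} together with the fact that, since $Q$ is acyclic, the idempotent $e_i := \sum_j e_{ij} = 1_{A_i}$ satisfies $e_i A e_i = A_i$ (there are no nontrivial paths $i \to i$).

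For the endomorphism rings I would argue as follows. By Lemma \ref{lemma_std} the paths $p$ contributing to $\Delta_{ij}$ are exactly those with $s(p) = i$ and $\max(p) = i$, and applying $e_i$ kills every component except the trivial path, so $e_i \Delta_{ij} \cong \Delta_j^{A_i}$ as an $e_iAe_i = A_i$-module. Moreover $\Delta_{ij}$ is generated by the image of $e_{ij} \in e_i A e_{ij}$, hence $\Delta_{ij} = A(e_i\Delta_{ij})$. Any $\phi \in \End_A(\Delta_{ij})$ therefore restricts to an $A_i$-endomorphism of $e_i\Delta_{ij} \cong \Delta_j^{A_i}$, which is a scalar because $A_i$ is quasi-hereditary; since $e_i\Delta_{ij}$ generates $\Delta_{ij}$, this forces $\phi$ to be that same scalar. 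Thus $\End_A(\Delta_{ij}) \cong \field$, and notably this step needs no hypothesis on the $M_\alpha$.

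The substance of the theorem is the standard filtration of $P_{ij}$, and here I would filter by the maximal vertex reached. Set $J_k := A\big(\sum_{i'\ge k}1_{A_{i'}}\big)A$, the ideal spanned by all paths passing through some vertex $\ge k$; then $P_{ij} = J_1 e_{ij} \supseteq J_2 e_{ij} \supseteq \dots \supseteq J_{n+1}e_{ij} = 0$, and the $k$-th layer is $\bigoplus_{p:\,s(p)=i,\ \max(p)=k} M_p \otimes_{A_i} P_j^{A_i}$. Since $Q$ is acyclic, each such $p$ factors uniquely as $p = p_2 p_1$ through its single visit to $k$, with $s(p_2)=\max(p_2)=k$ and $t(p_1)=\max(p_1)=k$, $s(p_1)=i$; using $M_p = M_{p_2}\otimes_{A_k}M_{p_1}$ this identifies the layer with $\tilde P_k \otimes_{A_k} X^{(k)}$, where $\tilde P_k := \bigoplus_{p_2} M_{p_2}$ and $X^{(k)} := \bigoplus_{p_1} M_{p_1}\otimes_{A_i}P_j^{A_i}$, and where by Lemma \ref{lemma_std} one has $\tilde P_k \otimes_{A_k}\Delta_l^{A_k} \cong \Delta_{kl}$ as $A$-modules. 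It therefore suffices to show that $X^{(k)}$ has a $\Delta^{A_k}$-filtration as a left $A_k$-module and that $\tilde P_k \otimes_{A_k}-$ carries it to a $\Delta_{k\bullet}$-filtration of the layer.

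Both points rest on one lemma, which I expect to be the main technical step: \emph{if every arrow $\alpha$ occurring in a path $p$ is projective as a right $A_{s(\alpha)}$-module, then $M_p$ is projective as a right $A_{s(p)}$-module}, and dually on the left. This follows by induction on the length of $p$: writing $p = p'\alpha_1$ and using that $M_{p'}$ is a direct summand of a free right $A_{t(\alpha_1)}$-module, the additive functor $-\otimes_{A_{t(\alpha_1)}}M_{\alpha_1}$ exhibits $M_p$ as a right-$A_{s(p)}$-summand of a power of $M_{\alpha_1}$, which is right-projective by assumption. The hypotheses of the theorem then apply exactly where needed: every arrow of $p_2$ lies in the path $p_2$ with $\max(p_2)=s(p_2)$, hence is right-projective, so $\tilde P_k$ is projective as a right $A_k$-module and $\tilde P_k\otimes_{A_k}-$ is exact; dually every arrow of $p_1$ lies in $p_1$ with $\max(p_1)=t(p_1)$, hence is left-projective, so each $M_{p_1}$ is projective as a left $A_k$-module. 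Combining the latter with the fact that $P_j^{A_i}$ is a projective left $A_i$-module, the additive functor $M_{p_1}\otimes_{A_i}-$ makes $M_{p_1}\otimes_{A_i}P_j^{A_i}$ a summand of a power of $M_{p_1}$, hence projective as a left $A_k$-module; thus $X^{(k)}$ is $A_k$-projective and, since $A_k$ is quasi-hereditary, lies in $\filt(\Delta^{A_k})$. Exactness of $\tilde P_k\otimes_{A_k}-$ turns this into a $\Delta_{k\bullet}$-filtration of the $k$-th layer, and assembling the layers gives $P_{ij}\in\filt(\Delta)$. The one remaining subtlety to check carefully is that the identification of the layer with $\tilde P_k\otimes_{A_k}X^{(k)}$ respects the $A$-module structure — in particular that arrows into vertices above $k$ act as zero on the subquotient, matching the module structure stipulated in Lemma \ref{lemma_std} — so that the subquotients are genuinely the standard $A$-modules $\Delta_{kl}$.
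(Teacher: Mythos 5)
Your proof is correct, but it takes a genuinely different route from the paper. The paper argues by induction on the total number of primitive idempotents, using the heredity-ideal-chain definition: it peels off the single ideal $J=Ae_{nm_n}A$, verifies $J^2=J$, $J\rad(A)J=0$ and right projectivity of $J$ (via $J\cong A1_{A_n}\otimes_{A_n}A_ne^n_{m_n}A_n\otimes_{A_n}1_{A_n}A$), and then checks that $A/J$ is again the tensor algebra of a species satisfying the same hypotheses with one fewer idempotent. You instead verify the Dlab--Ringel criterion directly: you compute $\End_A(\Delta_{ij})\cong\field$ via the corner $e_iAe_i=A_i$, and you exhibit an explicit $\Delta$-filtration of each $P_{ij}$ by filtering along the maximal vertex a path passes through, factoring each layer as $\tilde P_k\otimes_{A_k}X^{(k)}$ with $\tilde P_k$ right $A_k$-projective and $X^{(k)}$ left $A_k$-projective. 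Both proofs rest on the same key observation --- that right (resp.\ left) projectivity propagates from the arrows of a path to $M_p$, and that the hypothesis supplies this exactly for paths $p$ with $\max(p)=s(p)$ (resp.\ $\max(p)=t(p)$) --- so the hypothesis is consumed in the same places. What the paper's induction buys is brevity and the avoidance of the module-structure bookkeeping you rightly flag at the end (checking that arrows into vertices above $k$ act as zero on the subquotient, so that the layers really are the $\Delta_{kl}$ of Lemma \ref{lemma_std}); that check is easy, since multiplying a path with $\max(p)=k$ by such an arrow lands in $J_{k+1}$. What your argument buys is more information: it produces the standard filtration of the projectives explicitly, essentially giving the multiplicities $[P_{ij}:\Delta_{kl}]$ in terms of filtration multiplicities of $X^{(k)}$, and it isolates the endomorphism-ring computation, which the paper's route subsumes into the heredity ideal conditions. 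One small point to make airtight in your write-up: the regrouping of the layer as $\bigl(\bigoplus_{p_2}M_{p_2}\bigr)\otimes_{A_k}\bigl(\bigoplus_{p_1}M_{p_1}\otimes_{A_i}P_j^{A_i}\bigr)$ uses that every pair $(p_2,p_1)$ with $s(p_2)=\max(p_2)=k=t(p_1)=\max(p_1)$ concatenates to a path with $\max=k$ and that the factorization is unique by acyclicity; both hold, but they are worth stating.
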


\begin{proof}
    We proceed by induction on the number $N=\sum_{i=1}^n m_i$ of primitive orthogonal idempotents $e_{ij}$ of $A$. If $N=1$, then $A=A_1$ is quasi-hereditary by assumption.\\
    Now suppose $N>1$, and consider the ideal $J=Ae_{nm_n}A$. Then, as $A$-$A$-bimodules
    \begin{align*}
        J=Ae_{nm_n}A=\left(\bigoplus_{s(p)=n}M_p\right)e_{nm_n}\left(\bigoplus_{t(p)=n}M_p\right).
    \end{align*}
    Note that for all paths $p$ with $s(p)=n$ we have $\max(p)=s(p)$, and for all $p$ with $t(p)=n$ we have $\max(p)= t(p)$. Thus, by assumption $M_\alpha$ is projective as a right $A_{s(\alpha)}$-module respectively as a left $A_{t(\alpha)}$-module for all $\alpha$ appearing in such paths. Hence $M_p$ is a tensor product of right-projective modules for every path $p$ in $Q$ with $s(p)=n$, and thus projective as a right $A_n$-module. Similarly, $M_p$ is a tensor product of left-projective modules for every  every path $p$ in $Q$ with $t(p)=n$, and hence projective as a left $A_n$-module.
    Thus, we have 
    \begin{align*}
        J=Ae_{nm_n}A&=\left(\bigoplus_{s(p)=n}M_p\right)e_{nm_n}\left(\bigoplus_{t(p)=n}M_p\right)\\
        \cong \bigoplus_{s(p)=n}M_p\otimes_{A_n}A_ne_{m_n}^n A_n \otimes_{A_n} \bigoplus_{t(p)=n}M_p&= A1_{A_n}\otimes_{A_n}A_ne_{m_n}^n A_n \otimes_{A_n}1_{A_n}A,
    \end{align*}
   where $A1_{A_n}$ is a projective right $A_n$-module and $1_{A_n}A$ is a projective left $A_n$-module.
    By assumption, $A_n$ is quasi-hereditary with respect to the natural order on the idempotents, so that $A_n e_{m_n}^nA_n$ is projective as a right $A_n$-module. But then $J\cong A1_{A_n}\otimes_{A_n}A_ne_{m_n}^n A_n \otimes_{A_n}1_{A_n}A$ is projective as a right $A$-module.\\
    By definition, $J^2=J$. Moreover, 
    \begin{align*}
        J\rad(A)J&=Ae_{nm_n}\rad(A)e_{nm_n}A\\
        &=Ae_{nm_n}(\bigoplus_{i=1}^n \rad(A_i)\oplus \bigoplus_{p\neq e_i\textup{ for }1\leq i\leq n}M_p)e_{nm_n}A\\
        &=\sum_{i=1}^n Ae_{nm_n}\rad(A_i)e_{nm_n}A+\sum_{p\neq e_i\textup{ for }1\leq i\leq n}Ae_{nm_n}M_pe_{nm_n}A\\
        &=Ae_{nm_n}\rad(A_n)e_{nm_n}A
        =(0),
    \end{align*}
    since $Q$ has no directed cycles and $A_n$ is quasi-hereditary.\\
    If $A_n\neq A_n e_{nm_n}A_n$, then $A/J$ is isomorphic to the tensor algebra of the species on $Q$ given by $((A'_i), (M'_\alpha)_\alpha)$, where $A_i'=A_i$ for $i\neq n$ and $A_n':=A_n/A_n e_{n m_n}A_n$, $M_{\alpha}':=M_{\alpha}$ for $s(\alpha)\neq n\neq t(\alpha)$, $M_\alpha':=M_{\alpha}/M_{\alpha}e_{n m_n}A_n$ for $s(\alpha)=n$ and $M_{\alpha}':=M_{\alpha}/A_ne_{nm_n}M_{\alpha}$ for $t(\alpha)=n$.
    Note that $A_i'$ is quasi-hereditary for all $1\leq i\leq n$ and $M_{\alpha}'$ is projective as a right $A_{s(\alpha)}$-module whenever $M_{\alpha}$ is projective as a right $A_{s(\alpha)}$-module, unless $t(\alpha)=n$; and projective as a left $A_{t(\alpha)}$-module whenever $M_{\alpha}$ is projective as a left $A_{t(\alpha)}$-module, unless $s(\alpha)=n$. 
    Since $n$ is the maximal vertex and $Q$ contains no oriented cycles, this implies that $M_{\alpha}$ is projective as a right $A_{s(\alpha)}$-module whenever $\alpha$ appears in a path $p$ with $\max(p)=s(p)$, and projective as a left $A_{t(\alpha)}$-module whenever $\alpha$ appears in a path $p$ with $\max(p)=t(p)$. Moreover, the number of primitive orthogonal idempotents of $A/J$ is $N-1$. Thus, by induction hypothesis, $A/J$ is quasi-hereditary.\\
    On the other hand, if $A_n=A_ne_{nm_n}A_n$, then $A/J$ is isomorphic to the tensor algebra of the species on $Q'$ given by $((A_i)_{1\leq i\leq n-1}, (M_{\alpha})_{\alpha\in E_{Q'}})$, where $Q'$ is the quiver obtained from $Q$ by deleting the vertex $n$ and all adjacent edges. In this case, the number of primitive orthogonal idempotents of $A/J$ is again $N-1$, and the species $Q'$ fulfills the assumptions of the theorem, so that $A/J$ is quasi-hereditary.
 \end{proof}
 \begin{remark}\label{remark_qh}
     Suppose instead that $Q$ is directed. Then, as we have seen in Example \ref{example_std}, the standard modules of $A$ are just the standard modules of the $A_i$ viewed as modules over $A$. In particular, it is easy to see, arguing analogously to \cite[Theorem 3.1]{triangular_matrix}, that $A$ has a filtration by standard modules if and only if $M_p$ is standardly filtered as a left $A_{t(p)}$-module for all paths $p$ in $Q$. In particular, since a triangular matrix ring is nothing but a species on the $\textup{A}_2$-quiver, this gives rise to a proper generalization of \cite[Theorem 3.1]{triangular_matrix}.
 \end{remark}
 \begin{example}
    Note that in Remark \ref{remark_qh} we require that $M_p$ is left standardly filtered for every path, not just for every arrow. This is because, for two composable arrows $\alpha:i\rightarrow j$ and $\beta:j\rightarrow k$ in $Q$, the fact that $M_\beta$ is left-standardly filtered in general does not imply that $M_\beta\otimes M_\alpha$ is. The following is an explicit example of this behaviour.\\
      Let $Q$ be the $\textup{A}_3$ quiver
      \begin{center}
\begin{tikzcd}[ampersand replacement=\&]
	1 \& 2 \& 3
	\arrow["\alpha", from=1-1, to=1-2]
	\arrow["\beta", from=1-2, to=1-3]
\end{tikzcd}
  \end{center}
      and let $Q'$ be the $\textup{A}_2$-quiver
      \begin{center}
\begin{tikzcd}[ampersand replacement=\&]
	1 \& 2
	\arrow["\alpha", from=1-1, to=1-2]
\end{tikzcd}
  \end{center}
        Let  $A_1=\field Q'=A_2$ be the quasi-hereditary algebra equipped with the natural order, so that $\Delta_1^{A_i}=L_1^{A_i}$ and $\Delta_2^{A_i}=P_2^{A_i}=L_2^{A_i}$ for $i=1,2$.\\
        Let $A_3=\field Q'$ be the quasi-hereditary algebra equipped with the opposite of the natural order, so that $\Delta_3^{A_3}=P_1^{A_3}$ and $\Delta_2^{A_3}=L_2^{A_3}=P_2^{A_3}$.
        Let $M_\beta=\field Q'$ as an $A_3$-$A_2$-bimodule, and let $M_{\alpha}=\field e_1$ as an $A_2$-$A_1$-bimodule, that is, $M_\alpha\cong L_1^{A_2}\otimes L_1^{A_1^{\op}}$.\\
        Let $A$ be the tensor algebra of the species on $Q$ given by $( (A_i)_i, (M_{\alpha}, M_{\beta}))$ with the natural order on $Q$. Then, since $\field Q$ is directed, the standard modules of $A$ are simply the standard modules of $A_1, A_2$ and $A_3$ viewed as $A$-modules.\\
      Despite the fact that $M_{\alpha}$ and $M_\beta$ are standardly filtered as left modules, $A$ doesn't have a standard filtration since $M_{\beta\alpha}\cong L_1^{A_3}\otimes L_1^{A_1^{\op}}$ isn't standardly filtered as a left $A_3$-module.
      \end{example}
\subsection{Exact Borel subalgebras of species}
In the following, we investigate when exact Borel subalgebras of $A_1, \dots, A_n$ give rise to an exact Borel subalgebra of $A$.
We begin by fixing one last piece of notation:
\begin{definition}\label{def_tensor}
    Let $(A, \leq_A)$ be a quasi-hereditary algebra and $B\subseteq A$ be an exact Borel subalgebra of $A$. Then for any $B$-module $N$ we denote by $f_N$ the $B$-module homorphism
    \begin{align*}
        f_N:N\rightarrow A\otimes_B N, n\mapsto 1\otimes n.
    \end{align*}
    Note that e.g. by \cite[Theorem 3.1]{Conde2}, $f_N$ is a monomorphism
\end{definition}
Now we are ready to prove the main theorem of this section. The goal is to use an exact Borel subalgebra $B_Q=\field H$ of the underlying quiver $Q$ of the species, as well collection of exact Borel subalgebras $(B_{i})_i$ of the quasi-hereditary algebras $(A_i)_i$ in order to construct a Borel subalgebra of the tensor algebra of the species via assembling assembling the $(B_{i})_i$ together with an appropriate set of bimodules into a species on $H$. In order to be able to define the right $B_i$-$B_j$-bimodules, we need to assume some conditions on the bimodules $(M_{\alpha})_{\alpha}$.
 \begin{theorem}\label{thm_borel_species}
 Suppose we are in the setting of Theorem \ref{thm_qh_species}. Then \cite[Theorem 3.6]{monomialborel} yields an exact Borel subalgebra $B_Q$ of $\field Q$ with a basis $\mathcal{B}$ given by paths in $\field Q$. Let $Q'$ be the quiver with vertices $1,\dots ,n$ and arrows given by the elements of $\mathcal{B}$.\\
     Suppose additionally that for every $1\leq i\leq n$ the quasi-hereditary algebra $A_i$ has an exact Borel subalgebra $B_i$, and assume that for every $\beta: i\rightarrow j\in \mathcal{B}$ there is a $B_j$-$B_i$-bimodule $N_\beta$ such that $M_{\beta}\cong A_j\otimes_{B_j} N_{\beta}$ as $A_j$-$B_i$-bimodules. Denote the isomorphism by $\phi_{\beta}:  A_j\otimes_{B_j} N_{\beta}\rightarrow M_{\beta}$.\\ 
     Then the tensor algebra $B$ of the species on $Q'$ given by $((B_i)_i, (N_{\beta})_{\beta\in \mathcal{B}})$ is an exact Borel subalgebra of $A$ with the embedding given componentwise by
     \begin{align*}
      \iota_{\beta_1\dots\beta_n}:  N_{\beta_1}\otimes_{B_2}N_{\beta_2}\otimes_{B_3}\dots\otimes_{B_{n}}N_{\beta_n}\rightarrow M_{\beta_1\dots\beta_n}, \\
        x_1\otimes\dots\otimes x_n\mapsto \phi_{\beta_1}(1_{A_1}\otimes x_1)\otimes_{A_2}\dots\otimes_{A_n}\phi_{\beta_n}(1_{A_n}\otimes x_n)
     \end{align*}
     respectively by the embeddings $\iota_i:B_i\rightarrow A_i$.
 \end{theorem}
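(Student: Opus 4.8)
The plan is to check directly the three defining properties of an exact Borel subalgebra for the map $\iota\colon B\to A$, after first verifying that $\iota$ is a well-defined injective algebra homomorphism. Throughout, for a path $q=\beta_k\cdots\beta_1$ in $Q'$ I write $p(q)$ for the path in $Q$ obtained by concatenating the $\mathcal{B}$-paths $\beta_1,\dots,\beta_k$, so that $\iota$ maps the homogeneous component $N_q$ of $B$ into the summand $M_{p(q)}$ of $A$. \emph{(The embedding.)} Well-definedness of each $\iota_{\beta_1\cdots\beta_n}$ on $N_{\beta_1}\otimes_{B_2}\cdots\otimes_{B_n}N_{\beta_n}$ is immediate from the $\phi_\beta$ being bimodule homomorphisms, and multiplicativity reduces to comparing the species multiplication of $B$ with that of $A$ after applying the $\phi_\beta$. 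For injectivity I would use that, by the construction of \cite[Theorem 3.6]{monomialborel}, distinct paths $q$ in $Q'$ produce distinct paths $p(q)$ in $Q$, so the images of the $N_q$ lie in distinct summands $M_{p(q)}$ of $A$; on each $N_q$ injectivity then follows from the $\phi_\beta$ being isomorphisms and the maps $f_N$ being monomorphisms (Definition \ref{def_tensor}).

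\emph{(Directedness.)} Since $B_Q$ is an exact Borel subalgebra of $\field Q$ it is directed, which forces every arrow of $Q'$ to increase the vertex label; thus $Q'$ is a directed quiver. As each $B_i$ is directed, Example \ref{example_std}(1) applied to the species $((B_i)_i,(N_\beta)_\beta)$ shows that the standard modules of $B$ are the simple modules $\Delta_j^{B_i}=L_j^{B_i}$. Hence $(B,\leq_B)$ is directed with respect to the order induced from $\leq_A$ via the bijection $L_{ij}^B\mapsto L_{ij}^A$.

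\emph{(Key isomorphism, projectivity and standard modules.)} The technical engine is the bimodule isomorphism $M_{p(q)}\cong A_{t(q)}\otimes_{B_{t(q)}}N_q$, which I would prove by induction on the length of $q$: the base case is the hypothesis $M_\beta\cong A_j\otimes_{B_j}N_\beta$, and the inductive step uses associativity together with the cancellation $M_\beta\otimes_{A_m}(A_m\otimes_{B_m}-)\cong M_\beta\otimes_{B_m}-$ and the definition of the species product in $B$. For projectivity of $A$ as a right $B$-module I would decompose $A=\bigoplus_j 1_{A_j}A$ along the target vertex (which the right $B$-action preserves) and factor each path $p$ in $Q$ uniquely as $p=g\cdot p(q)$, where $g$ is a generator of $\field Q$ over $B_Q$ and $q$ is a path in $Q'$; the monomial construction can be arranged so that each such $g$ satisfies $\max(g)=s(g)$, whence $M_g$ is projective as a right $A_{s(g)}$-module by the hypothesis of Theorem \ref{thm_qh_species} and therefore projective as a right $B_{s(g)}$-module. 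Via the key isomorphism this identifies $1_{A_j}A$ with a direct sum of right $B$-modules of the form $M_g\otimes_{B_{s(g)}}(1_{B_{s(g)}}B)$, each of which is projective. Finally, for the standard-module condition I would compute $A\otimes_B L_{ij}^B\cong Ae_{ij}/A\iota(\rad(B)e_{ij})$ using right-exactness of $A\otimes_B-$, and separate the radical generators into those from $\rad(B_i)$, which replace every $A_i$-factor by $\Delta_j^{A_i}=A_i\otimes_{B_i}L_j^{B_i}$, and those from the arrows $N_q$ leaving $i$, which delete precisely the summands $M_p$ with $\max(p)>i$; comparison with Lemma \ref{lemma_std} then yields $A\otimes_B L_{ij}^B\cong\Delta_{ij}^A$.

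\emph{(Main obstacle.)} The hard part is the coupling of the quiver-level and vertex-level data. Since the species tensor algebra is not a genuine tensor product of $\field Q$ with the $A_i$, the K\"unneth-type argument available for ordinary tensor products (Proposition \ref{proposition_regular}) cannot be used; instead the combinatorics of $\mathcal{B}$ from \cite{monomialborel} — in particular the factorization of an arbitrary path of $Q$ through a generator with maximum at its source, and the generation of the paths through higher vertices by the arrows of $B$ — must be matched with the projectivity hypotheses on the $M_\alpha$ and the isomorphisms $M_\beta\cong A_j\otimes_{B_j}N_\beta$. Establishing this matching is exactly what makes both the projectivity of $A$ over $B$ and the identity $A\otimes_B L_{ij}^B\cong\Delta_{ij}^A$ go through, and I expect it to be the most delicate point of the proof.
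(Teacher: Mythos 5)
Your proposal follows essentially the same route as the paper's proof: injectivity of $\iota$ via the monomorphisms $f_N$ of Definition \ref{def_tensor} composed with the iterated isomorphisms built from the $\phi_\beta$, directedness of $B$ from directedness of $Q'$ and of the $B_i$, projectivity of $A$ as a right $B$-module from the free right $B_Q$-basis $\mathcal{R}$ of $\field Q$ supplied by \cite{monomialborel} together with right projectivity over $B_{s(r)}$ of the corresponding factors, and the identification $A\otimes_B L_{ij}^B\cong\Delta_{ij}$ by computing the quotient of the induced projective by the induced radical and comparing with Lemma \ref{lemma_std}. The one point where you are more explicit than the paper's displayed argument is in keeping the factor $M_g$ in the projective decomposition and justifying its right projectivity via $\max(g)=s(g)$ and the hypotheses of Theorem \ref{thm_qh_species}; this is a careful rendering of the same decomposition rather than a different method.
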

 \begin{proof}
     For ease of notation, we will assume that $B_i$ is in fact a subset of $A_i$, that is, we will suppress the embeddings $\iota_i: B_i\rightarrow A_i$.\\
     Since all $B_i$ are directed and $Q'$ is directed, $B$ is directed. Moreover, note that $\iota$ is indeed injective, since it is a composition of the canonical embedding
     \begin{align*}
     f_{N_{\beta_1}\otimes_{B_2}N_{\beta_2}\otimes_{B_3}\dots\otimes_{B_{n}}N_{\beta_n}}:& N_{\beta_1}\otimes_{B_2}N_{\beta_2}\otimes_{B_3}\dots\otimes_{B_{n}}N_{\beta_n}\\&\rightarrow A_1\otimes_{B_1}N_1\otimes_{B_2}N_{\beta_2}\otimes_{B_3}\dots\otimes_{B_{n}}N_{\beta_n}
     \end{align*}
     from Lemma \ref{def_tensor} with the isomorphism $\phi_p$ given by the composition
\[\begin{tikzcd}[ampersand replacement=\&]
	{A_1\otimes_{B_1}N_1\otimes_{B_2}N_{\beta_2}\otimes_{B_3}\dots\otimes_{B_{n}}N_{\beta_n}} \& {M_{\beta_1}\otimes_{B_2}N_{\beta_2}\otimes_{B_3}\dots\otimes_{B_{n}}N_{\beta_n}} \\
	{M_{\beta_1}\otimes_{A_2}M_{\beta_2}\otimes_{B_3}\dots\otimes_{B_{n}}N_{\beta_n}} \& {M_{\beta_1}\otimes_{A_2}A_2\otimes_{B_2}N_{\beta_2}\otimes_{B_3}\dots\otimes_{B_{n}}N_{\beta_n}} \\
	\dots \& {M_{\beta_1\dots\beta_n}=M_{\beta_1}\otimes_{A_2}\dots\otimes_{A_n} M_{\beta_n}}
	\arrow["{\phi_{\beta_1}\otimes\id}", from=1-1, to=1-2]
	\arrow["\sim", from=1-2, to=2-2]
	\arrow["\sim"', from=2-1, to=3-1]
	\arrow["{\id\otimes\phi_{\beta_2}\otimes\id}"', from=2-2, to=2-1]
	\arrow["\sim"', from=3-1, to=3-2]
\end{tikzcd}\]
Let us first show that $A$ is projective as a right $B$-module. By \cite[Corollary 3.11]{monomialborel}, there is a set $\mathcal{R}$ of paths  in $\field Q$ such that there is an isomorphism of right $B_Q$-modules 
\begin{align*}
    \bigoplus_{r\in \mathcal{R}}e_{s(r)}B_Q\rightarrow \field Q,\; (e_{s(r)}b)_{r\in \mathcal{R}}\mapsto \sum_{r\in \mathcal{R}}rb.
\end{align*}
In particular, for any path $p$ in $Q$ there is exactly one $r\in \mathcal{R}$ and one path $q$ in $Q'$ such that $p=rq$.
Now consider the map
\begin{align*}
   \phi: \bigoplus_{r\in \mathcal{R}}A_{s(r)}\otimes_{B_{s(r)}}1_{B_{s(r)}}B=\bigoplus_{r\in \mathcal{R}}A_{s(r)}\otimes_{B_{s(r)}} (\bigoplus_{q\textup{ path in }Q', t(q)=s(r)}N_q)\rightarrow A\\
    (a_{r}\otimes n_q)_{p,q}\mapsto (\phi_{rq}(a_r\otimes n_q))_{rq}.
\end{align*}
Then, since for every $p$ there is exactly one pair $(r,q)$ such that $p=rq$, and since for every pair $(r,q)$ the map $\phi_{rq}$ is an isomorphism, $\phi$ is an isomorphism. Moreover, $\phi$ is a right $B$-module homomorphism and $\bigoplus_{r\in \mathcal{R}}A_{s(r)}\otimes_{B_{s(r)}}1_{B_{s(r)}}B$ is projective as a right $B$-module, since $A_{s(r)}$ is projective as a right $B_{s(r)}$-module and $1_{B_{s(r)}}B$ is projective as a right $B$-module for every $r\in \mathcal{R}$. Hence, $A$ is projective as a right $B$-module.\\
Recall that the set of isomorphism classes of simple $B$-modules is in bijection with the disjoint union of the sets of isomorphism classes of simple $B_i$-modules, and the set of isomorphism classes of simple $A$-modules is in bijection with the disjoint union of the sets of isomorphism classes of simple $A_i$-modules. Hence the bijections $\varphi_i:\Sim(B_i)\rightarrow \Sim(A_i)$ induce a bijection $\varphi:\Sim(B)\rightarrow \Sim(A)$. Let $L_{ij}^B$ be a simple $B$-module corresponding to a simple $B_i$-module $L_j^{B_i}$, that is, $L_{ij}^B\cong B_i^B\otimes_{B_i}L_j^{B_i}$.
Note that
\begin{align*}
   &A\otimes_B B_i^B\cong A1_{A_i}/\iota(A\otimes \bigoplus_{e_i\neq p\in \mathcal{B}}N_p )\\
   =&\bigoplus_{s(p)=i}M_p/\bigoplus_{e_i\neq p\in \mathcal{B}, s(p)=i, q}M_{qp}\cong \bigoplus_{s(p)=i, p\notin \field Q(\mathcal{B}\setminus \{e_i\})}M_p.
\end{align*}
Note that $\field Q(\mathcal{B}\setminus \{e_i\})e_i=\field Q\rad(B)e_i=\ker(\pi_i)$, where $\pi_i: P_i^Q\rightarrow \Delta_i^Q$ is the canonical projection, since $B_{Q}$ is an exact Borel subalgebra of $\field Q$. In particular, $\field Q(\mathcal{B}\setminus \{e_i\})e_i= \field Q \sum_{i'>i}e_{i'}\field Qe_i$, so that the module above is isomorphic to
\begin{align*}
   \bigoplus_{s(p)=i, p\notin \field Q \sum_{i'>i}e_{i'}\field Qe_i}M_p.
\end{align*}
Since $L_{ij}^B\cong B_i^B\otimes_{B_i}L_j^{B_i}$ we thus have
\begin{align*}
    A\otimes_B L_{ij}^{B}&\cong A\otimes_B B_i\otimes_{B_i} L_j^{B_i}\\
    &\cong   \bigoplus_{s(p)=i, p\notin \field Q \sum_{i'>i}e_{i'}\field Qe_i}M_p\otimes_{B_i} L_j^{B_i} \\
    &\cong   \bigoplus_{s(p)=i, p\notin \field Q \sum_{i'>i}e_{i'}\field Qe_i}M_p\otimes_{A_i} A_i\otimes_{B_i} L_j^{B_i}\\
    &\cong \bigoplus_{s(p)=i, p\notin \field Q \sum_{i'>i}e_{i'}\field Qe_i}M_p\otimes_{A_i}\Delta(\varphi_i(L_j^{B_i})^{A_i}\cong \Delta(\varphi(L_{ij}^B)).
\end{align*}
 \end{proof}
 \begin{corollary}\label{corollary_example}
     \begin{enumerate}
         \item Suppose $A_i=A_1$ for all $1\leq i\leq n$, $A_1$ has an exact Borel subalgebra $B_1$ and $M_{\alpha}\cong A_1$  for every $\alpha$. Then the tensor algebra $B$ of the species on $Q'$ given by $((B_1)_i, (B_1)_{\alpha})$ is an exact Borel subalgebra.
         In fact, in this case $A\cong A_1\otimes \field Q$, and under this isomorphism $B$ corresponds to the exact Borel subalgebra $B_1\otimes B_Q$ from Theorem \ref{thm_tensor}.
         \item Suppose $Q$ is directed. Then, whenever $M_{\alpha}$ is left projective for all $\alpha$, $((V_Q, \emptyset), (B_i)_i, \emptyset)$ is an exact Borel subalgebra.
         \item Suppose $M_{\alpha}$ is projective as an $A_i$-$A_j$-bimodule for all $\alpha$. Then $M_{\alpha}\cong P_{\alpha}\otimes_{\field}Q_{\alpha}$ for some projective $A_i$-module $P_{\alpha}$ and some projective $A_j^{\op}$-module $Q_{\alpha}$; and since $B_{t(\alpha)}$ is a regular exact Borel subalgebra of $A_{t(\alpha)}$ and the projective modules have a standard filtration, $P_{\alpha}\cong A_{t(\alpha)}\otimes_{B_{t(\alpha)}}N'_{\alpha}$ for some $B_{t(\alpha)}$-module $N'_{\alpha}$ \cite[A.4, p. 34]{KKO}. Hence the tensor algebra of the species on $Q'$ given by
         $((B_i)_i, (N'_{\alpha}\otimes Q_{\alpha}))$ is an exact Borel subalgebra of $A$.\\
         In the special case where $M_{\alpha}\cong A_{t(\alpha)}\otimes_{\field}A_{s(\alpha)}$ such that $A$ is a generalized path algebra in the sense of \cite{coelholiu} we obtain that the tensor algebra $B$ of the species on $Q'$ given by
         $((B_i)_i, (B_{t(\alpha)}\otimes A_{s(\alpha)}))$ is an exact Borel subalgebra of $A$. Note that this is in general not  a generalized path algebra in the sense of \cite{coelholiu}. 
     \end{enumerate}
 \end{corollary}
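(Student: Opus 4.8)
The plan is to obtain all three statements as instances of Theorem \ref{thm_borel_species}. In each case the argument reduces to two verifications: first, that the data lie in the setting of Theorem \ref{thm_qh_species}, i.e. that every $M_\alpha$ is projective as a right $A_{s(\alpha)}$-module whenever $\alpha$ lies on a path $p$ with $\max(p)=s(p)$ and projective as a left $A_{t(\alpha)}$-module whenever $\alpha$ lies on a path $p$ with $\max(p)=t(p)$; and second, that for each $\beta\colon i\to j$ in the path basis $\mathcal{B}$ of the monomial exact Borel subalgebra $B_Q$ of $\field Q$ from \cite[Theorem 3.6]{monomialborel} there is a $B_j$-$B_i$-bimodule $N_\beta$ with $M_\beta\cong A_j\otimes_{B_j}N_\beta$ as $A_j$-$B_i$-bimodules. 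Once both hold, Theorem \ref{thm_borel_species} directly produces the claimed exact Borel subalgebra, so the whole content of the corollary is the construction of the bimodules $N_\beta$ and the check that the relevant isomorphisms are two-sided.

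For (1), since $M_\alpha\cong A_1$ is free on both sides, the projectivity hypotheses of Theorem \ref{thm_qh_species} hold with no restriction. I would take $N_\beta:=B_1$ for every $\beta$; the canonical isomorphism $A_1\otimes_{B_1}B_1\cong A_1$ realises $M_\beta\cong A_{t(\beta)}\otimes_{B_{t(\beta)}}N_\beta$, and along any composite path $M_\beta$ is again $A_1$ because tensoring $A_1$ over $A_1$ reproduces $A_1$, so Theorem \ref{thm_borel_species} applies. For the final assertion I would use that the species tensor algebra with constant data $A_1$ at vertices and arrows is exactly $A_1\otimes\field Q$, as recalled in the introduction; under this identification the tensor algebra built from $B_1$ and $B_1$ becomes $B_1\otimes B_Q$. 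As $\field Q$ is hereditary, hence quasi-hereditary with exact Borel subalgebra $B_Q$, Theorem \ref{thm_tensor} independently shows that $B_1\otimes B_Q$ is an exact Borel subalgebra of $A_1\otimes\field Q$, which both reproves (1) and identifies $B$ with the tensor-product construction.

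For (3), projectivity of $M_\alpha$ as an $A_{t(\alpha)}$-$A_{s(\alpha)}$-bimodule means it is a summand of a free $A_{t(\alpha)}\otimes A_{s(\alpha)}^{\op}$-module, hence, over an algebraically closed field, a direct sum of modules $P\otimes_{\field}Q$ with $P$ projective over $A_{t(\alpha)}$ and $Q$ projective over $A_{s(\alpha)}^{\op}$; I write this as $M_\alpha\cong P_\alpha\otimes_{\field}Q_\alpha$. Since projectives are standardly filtered and $B_{t(\alpha)}$ is a regular exact Borel subalgebra, \cite[A.4, p.~34]{KKO} gives $P_\alpha\cong A_{t(\alpha)}\otimes_{B_{t(\alpha)}}N'_\alpha$ for a $B_{t(\alpha)}$-module $N'_\alpha$. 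Setting $N_\alpha:=N'_\alpha\otimes_{\field}Q_\alpha$, a $B_{t(\alpha)}$-$B_{s(\alpha)}$-bimodule, the associativity isomorphism yields $A_{t(\alpha)}\otimes_{B_{t(\alpha)}}N_\alpha\cong P_\alpha\otimes_{\field}Q_\alpha\cong M_\alpha$ as $A_{t(\alpha)}$-$B_{s(\alpha)}$-bimodules, which is precisely the hypothesis feeding Theorem \ref{thm_borel_species}. Specialising to $M_\alpha\cong A_{t(\alpha)}\otimes_{\field}A_{s(\alpha)}$ forces $N'_\alpha=B_{t(\alpha)}$ and $Q_\alpha=A_{s(\alpha)}$, giving $N_\alpha=B_{t(\alpha)}\otimes A_{s(\alpha)}$, as stated.

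For (2), directedness of $Q$ means every non-trivial path $p$ satisfies $\max(p)=t(p)$ and never $\max(p)=s(p)$, so the right-projectivity clause of Theorem \ref{thm_qh_species} is vacuous and the hypothesis collapses to left-projectivity of each $M_\alpha$, which is assumed; thus we are again in the setting, and using Example \ref{example_std}(1) the standard modules of $A$ are just those of the $A_i$, which I would compare against the modules induced from the $B_i$ to read off the simplified subalgebra. The main obstacle, shared by all three parts, is that Theorem \ref{thm_borel_species} demands the isomorphisms $M_\beta\cong A_{t(\beta)}\otimes_{B_{t(\beta)}}N_\beta$ to respect the left $A_{t(\beta)}$-action and the right $B_{s(\beta)}$-action at once: a one-sided isomorphism is routine, but producing a genuine bimodule isomorphism is the delicate point, and it is exactly here that (3) must invoke regularity of the $B_i$ together with \cite[A.4, p.~34]{KKO}, whereas (1) obtains it for free and (2) from left-projectivity.
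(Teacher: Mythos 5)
Your proposal is correct and follows essentially the same route as the paper: the corollary is obtained by verifying, case by case, the two hypotheses of Theorem \ref{thm_borel_species} (the projectivity conditions of Theorem \ref{thm_qh_species} and the existence of bimodules $N_\beta$ with $M_\beta\cong A_{t(\beta)}\otimes_{B_{t(\beta)}}N_\beta$), with the verification for (3) being exactly the one the paper embeds in the statement via \cite[A.4, p. 34]{KKO}. The only point worth polishing --- shared with the paper --- is that in (3) a projective bimodule is in general a direct sum $\bigoplus_k P_k\otimes_{\field}Q_k$ rather than a single product $P_\alpha\otimes_{\field}Q_\alpha$, so one should take $N_\alpha:=\bigoplus_k N'_k\otimes_{\field}Q_k$ summand by summand (and likewise for composite paths in $\mathcal{B}$); this does not affect the argument.
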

 The following proposition is the corresponding statement for triangular matrix rings under appropriately weakened assumptions. The proof, however, is essentially the same as the proof of the theorem above.
 \begin{proposition}\label{proposition_triangular_borel}
    Suppose $A_1$ and $A_2$ are quasi-hereditary algebra and $M$ is a left standardly filtered $A_2$-$A_1$-bimodule. Let $A$ be the triangular matrix ring 
\begin{align*}
    \begin{pmatrix}
        A_2 & M\\
        0 & A_1
    \end{pmatrix}
\end{align*}
    Suppose that $B_1$ and $B_2$ are exact Borel subalgebras of $A_1$ resp. $A_2$ with embeddings $\iota_1$ resp. $\iota_2$, and $M\cong A_2\otimes_{B_2} N$ as an $A_2$-$B_1$-bimodule for some $N_2$-$N_1$-bimodule $B$. Then the triangular matrix ring $B$ given by
    \begin{align*}
    \begin{pmatrix}
        B_2 & N\\
        0 & B_1
    \end{pmatrix}
\end{align*}
is an exact Borel subalgebra of $A$ via the embedding
     \begin{align*}
           \begin{pmatrix}
        \iota_2 & f_N\\
        0 & \iota_1
    \end{pmatrix}: B\rightarrow A.
     \end{align*}
 \end{proposition}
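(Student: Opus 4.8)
The plan is to read $A$ as the tensor algebra of the species on the $\textup{A}_2$-quiver $\alpha\colon 1\to 2$ carrying $A_1,A_2$ at the vertices and $M=M_\alpha$ on the arrow, with $B$ the corresponding subalgebra; the argument then runs parallel to the proof of Theorem~\ref{thm_borel_species}, the sole difference being that the left-projectivity hypothesis of Theorem~\ref{thm_qh_species} is replaced by the weaker assumption that $M$ is left standardly filtered. I would begin by recording that $A$ is quasi-hereditary: since the underlying quiver is directed, Remark~\ref{remark_qh} (equivalently \cite[Theorem 3.1]{triangular_matrix}) shows ${}_{A}A\in\filt(\Delta)$ precisely because $M$ is left standardly filtered, while $\End_A(\Delta_{ij})\cong\field$ is inherited from $A_1,A_2$. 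Since the proof of Lemma~\ref{lemma_std} uses only the partial order and no projectivity, it still applies and yields $\Delta_{2,j}\cong\Delta_j^{A_2}$ and $\Delta_{1,j}\cong\Delta_j^{A_1}$ as $A$-modules; note here that $\alpha\in\field Qe_2\field Q$, so it is annihilated in $\Delta_{1,j}$.

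Next I would verify the three axioms. Directedness of $B$ is immediate from Example~\ref{example_std}(1), since the $B_i$ and the quiver are directed, so the standard $B$-modules are simple. That $\begin{pmatrix}\iota_2 & f_N\\ 0 & \iota_1\end{pmatrix}$ is an injective algebra map reduces to two points: $\iota_1,\iota_2$ are injective and $f_N$ is a monomorphism by Definition~\ref{def_tensor} (applied to the Borel pair $B_2\subseteq A_2$ and the left $B_2$-module $N$, composed with $A_2\otimes_{B_2}N\cong M$); and multiplicativity amounts to $f_N$ being a $B_2$-$B_1$-bimodule homomorphism, i.e.\ $\iota_2(b_2)f_N(n)=f_N(b_2n)$ and $f_N(n)\iota_1(b_1)=f_N(nb_1)$, both of which hold since $f_N(n)=1\otimes n$.

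For exactness of induction I would decompose $A=\epsilon_2A\oplus\epsilon_1A$ as a right $B$-module via the diagonal idempotents $\epsilon_1,\epsilon_2$. The key identities are $\epsilon_1A\cong A_1\otimes_{B_1}\epsilon_1B$ and $\epsilon_2A\cong A_2\otimes_{B_2}\epsilon_2B$, the latter obtained by writing $\epsilon_2A\cong A_2\oplus M$ and using $M\cong A_2\otimes_{B_2}N$ to identify it with $A_2\otimes_{B_2}(B_2\oplus N)=A_2\otimes_{B_2}\epsilon_2B$. Since $A_i$ is projective over $B_i$ and $\epsilon_iB$ is projective over $B$, each summand, hence $A$, is projective as a right $B$-module. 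Finally, identifying $\Sim(B)=\Sim(B_1)\sqcup\Sim(B_2)$ with $\Sim(A)=\Sim(A_1)\sqcup\Sim(A_2)$ through the bijections $\varphi_1,\varphi_2$, I would compute $A\otimes_BL^B_{2,j}\cong A_2\otimes_{B_2}L_j^{B_2}\cong\Delta_j^{A_2}=\Delta_{2,j}$ and $A\otimes_BL^B_{1,j}\cong A_1\otimes_{B_1}L_j^{B_1}\cong\Delta_j^{A_1}=\Delta_{1,j}$.

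The step demanding the most care is the lower-vertex induction $A\otimes_BL^B_{1,j}$: one must confirm that the off-diagonal bimodule $M$ contributes nothing, i.e.\ that $A\otimes_BB_1^B\cong A1_{A_1}/\iota(A\otimes N_\alpha)$ collapses to $M_{e_1}=A_1$ rather than to $M$, which holds because $N_\alpha\subseteq\rad(B)$ pushes $\iota(N_\alpha)$ into the submodule being quotiented out, killing $\alpha$ exactly as in Theorem~\ref{thm_borel_species}. Beyond this bookkeeping I anticipate no genuine obstacle: the whole point is that the left-standard-filtration hypothesis enters only through quasi-heredity of $A$ (supplied by \cite[Theorem 3.1]{triangular_matrix}), whereas the Borel-subalgebra verification itself never uses left-projectivity of $M$, but only quasi-heredity together with right-projectivity of $A_i$ over $B_i$ and the isomorphism $M\cong A_2\otimes_{B_2}N$.
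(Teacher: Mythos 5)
Your proposal is correct and follows essentially the same route as the paper: the same decomposition $A\cong A_1\otimes_{B_1}1_{B_1}B\oplus A_2\otimes_{B_2}1_{B_2}B$ of $A$ as a right $B$-module (using $M\cong A_2\otimes_{B_2}N$) to get exactness of induction, and the same transfer of the argument from Theorem \ref{thm_borel_species} for the identification $A\otimes_B L^B_{i,j}\cong\Delta_{i,j}$, with left projectivity of $M$ replaced by the left standard filtration only where quasi-heredity of $A$ is needed. You simply spell out the details (injectivity of the embedding via $f_N$, the collapse of $M$ in $A\otimes_B B_1^B$) that the paper compresses into ``it follows as in Theorem \ref{thm_borel_species}''.
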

 \begin{proof}
     It is clear that \begin{align*}
           \begin{pmatrix}
        \iota_2 & f_N\\
        0 & \iota_1
    \end{pmatrix}: B\rightarrow A.
     \end{align*} is an embedding. 
     Moreover, as right $B$-modules
     \begin{align*}
         A=\begin{pmatrix}
             0 & 0 \\
             0 & A_1
         \end{pmatrix}\oplus \begin{pmatrix}
             A_2 & M \\
             0 & 0
         \end{pmatrix}
         \cong \begin{pmatrix}
             0 & 0 \\
             0 & A_1\otimes_{B_1} B_1
         \end{pmatrix}\oplus \begin{pmatrix}
             A_2\otimes_{B_2} B_2 & A_2\otimes_{B_2} N \\
             0 & 0
         \end{pmatrix}\\
         \cong A_1\otimes_{B_1} \begin{pmatrix}
             0 & 0 \\
             0 &  B_1
         \end{pmatrix}\oplus A_2\otimes_{B_2} \begin{pmatrix}
             B_2 & N \\
             0 &  0
         \end{pmatrix}
         =
         A_1\otimes_{B_1} 1_{B_1}B\oplus A_2\otimes_{B_2} 1_{B_2}B
     \end{align*}
   so that $A$ is projective as a right $B$-module. \\
   As in Theorem \ref{thm_borel_species}, let $\varphi: \Sim(B)\rightarrow \Sim(A)$ be the bijection coming from the bijections $\varphi_i:\Sim(B_i)\rightarrow \Sim(A_i)$ for $1\leq i\leq 2$.
     It follows as in Theorem \ref{thm_borel_species} that the simple $B$-modules induce the corresponding standard modules over $A$.
 \end{proof}

      \subsection{Regularity}
      Given Theorem \ref{thm_borel_species}, it is a natural question under which conditions the exact Borel subalgebra constructed therein is regular. This turns out to be more complicated than the analogous question for tensor products, which we answered in \ref{proposition_regular}. Instead of giving a complete characterization, we therefore only give some examples, which hopefully illustrate, that, similarly to the case for tensor algebras, regularity cannot be expected in most cases.\\[0.7cm]
       Suppose we have a species on $Q$ given by $((A_i)_i, (M_{\alpha})_{\alpha})$ where $A_i=A_1$ for all $i$ and $M_{\alpha}=A_1$ as $A_1$-$A_1$-bimodules for all $\alpha$. Then, as we have seen in Corollary \ref{corollary_example}, the tensor algebra $A$ of $((A_i)_i, (M_{\alpha})_{\alpha})$ is isomorphic to $A_1\otimes \field Q$ as algebras, and if  $A_1$ has an exact Borel subalgebra $B_1$, then the exact Borel subalgebra in $A$ that we constructed in Theorem \ref{thm_borel_species} corresponds to $B_1\otimes B_Q$. Thus, even assuming that $B_1$ and $B_Q$ are regular, it is regular only under very restrictive conditions, as was shown in Proposition \ref{proposition_regular}. In general, we suppose that is difficult to conclude regularity of the exact Borel subalgebra for the species as well. However, similarly to the case for tensor algebras there are still some degenerate cases where the constructed Borel subalgebra is indeed general.
      \begin{remark}\label{proposition_regular_species}
      \begin{enumerate}
          \item Suppose $\field Q$ is directed and every $A_i$ is directed. Then, $A$ is directed, so the only exact Borel subalgebras of $A$ are its maximal semisimple subalgebras, and they are necessarily regular.
          \item Suppose $\field Q^{\op}$ is directed and every $A_i^{\op}$ is directed. Then, $A^{\op}$ is directed, so the only exact Borel subalgebra of $A$ is $A$ and it is necessarily regular.
          \item Suppose $\field Q$ is semisimple. Then, $A=\bigoplus_{i=1}^n A_i$, so that if for every $1\leq i\leq n$, $B_i$ is a regular exact Borel subalgebra of $A_i$, $B:=\bigoplus_{i=1}^n B_i$ is a regular exact Borel subalgebra of $A$.
          \end{enumerate}
          \end{remark}
          In analogy to Proposition \ref{proposition_regular}, one may expect regularity also in the case where all of the $A_i$ are semisimple. However, this does not hold in general.
          \begin{example} Suppose $A_i$ is semisimple for every $1\leq i\leq n$. Then, even if $\field Q$ has a regular exact Borel subalgebra, this is not necessarily true for $A$. To see this, let $Q$ be the $\textup{A}_3$ quiver 
\[\begin{tikzcd}[ampersand replacement=\&]
	3 \& 1 \& 2,
	\arrow["\alpha", from=1-1, to=1-2]
	\arrow["\beta", from=1-2, to=1-3]
\end{tikzcd}\]
let $A_2=\field=A_3$ and $A_1=\field^2$, and let $M_\alpha=A_1$ as a right $A_1$-module and $M_\beta=A_1$ as a left $A_1$-module. Note that by \cite[Theorem 60]{Markus}, $\field Q$ admits a regular exact Borel subalgebra.
However, $A$ is isomorphic to the path algebra of the quiver $Q'$ given by  
\[\begin{tikzcd}[ampersand replacement=\&]
	\& 1 \\
	4 \&\& 3 \\
	\& 2
	\arrow["\delta", from=1-2, to=2-3]
	\arrow["\alpha", from=2-1, to=1-2]
	\arrow["\beta"', from=2-1, to=3-2]
	\arrow["\gamma"', from=3-2, to=2-3]
\end{tikzcd}\]
      with the natural order on the vertices. By \cite[Example 5.17 (3)]{monomialborel}, this has an exact Borel subalgebra $B$ which is not regular. However, since $\field Q'$ is basic, any regular exact Borel subalgebra of $\field Q'$ would  be conjugate to $B$ by \cite[Theorem 8.4]{uniqueness}. Since $B$ is not regular, this implies that  $\field Q'$, and hence $A$, does not admit a regular exact Borel subalgebra.
      \end{example}
      Finally, we would like to remark that in contrast to the case of tensor algebras, there are many cases besides where the constructed exact Borel subalgebra is regular, seemingly by chance:
     \begin{example}
          Let $Q$ be the $\textup{A}_2$-quiver
\[\begin{tikzcd}[ampersand replacement=\&]
	1 \& 2,
	\arrow["\alpha", from=1-1, to=1-2]
\end{tikzcd}\]
let $A_1=A_2=\field Q$, where $A_2$ has the usual order and $A_1$ the opposite of the usual order, and let $M_\alpha=\field \alpha$ viewed as an $A_2$-$A_1$-bimodule.\\
Then $M_\alpha\cong L_2^{A_2}=P_2^{A_2}$ as a left $A_2$-module and $M_\alpha\cong L_1^{A_1^{\op}}=P_1^{A_1^{\op}}$ as a right $A_1$-module. Moreover, $A$ is isomorphic to the path algebra of the quiver $Q'$
\[\begin{tikzcd}[ampersand replacement=\&]
	2 \& 3 \\
	1 \& 4
	\arrow[from=1-1, to=2-1]
	\arrow[from=1-1, to=2-2]
	\arrow[from=1-2, to=2-2]
\end{tikzcd}\]
with the natural order. By \cite[Example 4.13]{Conde}, this has a regular exact Borel subalgebra.
     \end{example}
     
\bibliography{speciestest}

\begin{thebibliography}{10}

\bibitem{BKK}
T.~Brzeziński, S.~Koenig, and J.~Külshammer.
\newblock {F}rom quasi-hereditary algebras with exact {B}orel subalgebras to directed bocses.
\newblock {\em Bulletin of the London Mathematical Society}, 52(2):367--378, 2020.

\bibitem{Chan}
A.~Chan.
\newblock {S}ome homological properties of tensor and wreath products of quasi-hereditary algebras.
\newblock {\em Communications in Algebra}, 42:2368 -- 2379, 2012.

\bibitem{chuangkessar}
J.~Chuang and R.~Kessar.
\newblock {Symmetric Groups, Wreath Products, Morita Equivalences, and Brou{\'e}'s Abelian Defect Group Conjecture}.
\newblock {\em Bulletin of the London Mathematical Society}, 34, 2002.

\bibitem{CPS}
E.~Cline, B.~Parshall, and L.~Scott.
\newblock {F}inite dimensional algebras and highest weight categories.
\newblock {\em Journal für die reine und angewandte Mathematik}, 391:85--99, 1988.

\bibitem{icpn}
R.~Cobos, G.~Navarro, and J.~Peña.
\newblock A note on generalized path algebras.
\newblock {\em Revue Roumaine de Mathématiques Pures et Appliquées}, 53, 01 2008.

\bibitem{coelholiu}
F.~U. Coelho and S.~X. Liu.
\newblock Generalized path algebras.
\newblock In {\em Lecture Notes in Pure and Applied Mathematics}, 210, pages 53--66. Marcel Dekker, 2000.

\bibitem{Conde2}
T.~Conde.
\newblock All exact {Borel} subalgebras and all directed bocses are normal.
\newblock {\em Journal of Algebra}, 579:106--113, 2021.

\bibitem{Conde}
T.~Conde.
\newblock {A}ll quasihereditary algebras with a regular exact {B}orel subalgebra.
\newblock {\em Advances in Mathematics}, 384:107751, 2021.

\bibitem{DR2}
V.~Dlab and C.-M. Ringel.
\newblock Repr{\'e}sentations des graphes valu{\'e}s.
\newblock {\em Comptes rendus hebdomadaires des s{\'e}ances de l'Acad{\'e}mie des Sciences}, 278, 1974.

\bibitem{DR3}
V.~Dlab and C.-M. Ringel.
\newblock {O}n algebras of finite representation type.
\newblock {\em Journal of Algebra}, 33(2):306--394, 1975.

\bibitem{DR4}
V.~Dlab and C.-M. Ringel.
\newblock {\em {I}ndecomposable representations of graphs and algebras}.
\newblock Number 173 in Memoirs of the American Mathematical Society. American Mathematical Society, 1976.

\bibitem{DlabRingel}
V.~Dlab and C.-M. Ringel.
\newblock {T}he module theoretical approach to quasi-hereditary algebras.
\newblock In {\em {R}epresentations of algebras and related topics}, volume 168 of {\em London Mathematical Society Lecture Note Series}. Cambridge University Press, 1992.

\bibitem{Gabriel}
P.~Gabriel.
\newblock {R}epr\'esentations ind\'ecomposables.
\newblock In {\em S\'eminaire Bourbaki. : vol. 1973/74, expos\'es 436-452}, number~16 in S\'eminaire Bourbaki, pages 143--169. Springer-Verlag, 1975.
\newblock talk:444.

\bibitem{Keller}
B.~Keller.
\newblock {I}ntroduction to {A}-infinity {A}lgebras and {M}odules.
\newblock {\em Homology, Homotopy and Applications}, 3(1):1 -- 35, 2001.

\bibitem{Koenig}
S.~Koenig.
\newblock {E}xact {B}orel subalgebras of quasi-hereditary algebras, {I}.
\newblock {\em Mathematische Zeitschrift}, 220:399--426, 1995.

\bibitem{KKO}
S.~Koenig, J.~K{\"u}lshammer, and S.~Ovsienko.
\newblock {Q}uasi-hereditary algebras, exact {B}orel subalgebras, {A}-{$\infty$}-categories and boxes.
\newblock {\em Advances in Mathematics}, 262:546--592, 2014.

\bibitem{julian_basic}
J.~Külshammer.
\newblock {P}ro-species of algebras {I}: {B}asic properties.
\newblock {\em Algebras and Representation Theory}, 20, 10 2017.

\bibitem{Miemietz}
J.~Külshammer and V.~Miemietz.
\newblock {U}niqueness of {E}xact {B}orel {S}ubalgebras and {B}ocses.
\newblock \url{https://doi.org/10.48550/arXiv.2109.03586}, 2022.
\newblock To appear in Memoirs of the AMS.

\bibitem{Lemay}
J.~Lemay.
\newblock {V}alued graphs and the representation theory of {L}ie algebras.
\newblock {\em Axioms}, 1:111--148, 2011.

\bibitem{Li}
F.~Li.
\newblock {M}odulation and natural valued quiver of an algebra.
\newblock {\em Pacific Journal of Mathematics}, 256, 06 2014.

\bibitem{monomialborel}
A.~Rodriguez Rasmussen.
\newblock {E}xact {B}orel subalgebras of quasi-hereditary monomial algebras.
\newblock Not quite done yet.

\bibitem{skewgroup}
A.~Rodriguez Rasmussen.
\newblock Quasi-hereditary skew group algebras.
\newblock {\em Nagoya Mathematical Journal}, pages 1--44, 2024.

\bibitem{uniqueness}
A.~Rodriguez Rasmussen.
\newblock Uniqueness of regular exact {B}orel subalgebras.
\newblock {\em Advances in Mathematics}, 461, 2025.

\bibitem{Ringel}
C.-M. Ringel.
\newblock {R}epresentations of {K}-species and bimodules.
\newblock {\em Journal of Algebra}, 41(2):269--302, 1976.

\bibitem{Rouquier}
R.~Rouquier.
\newblock {{\(q\)}}-{Schur} algebras and complex reflection groups.
\newblock {\em Mosc. Math. J.}, 8(1):119--158, 2008.

\bibitem{Markus}
M.~Thuresson.
\newblock {E}xact {B}orel subalgebras of path algebras of quivers of {D}ynkin type {A}.
\newblock {\em Journal of Pure and Applied Algebra}, 228(5):107554, 2024.

\bibitem{triangular_matrix}
B.~Zhu.
\newblock {T}riangular matrix algebras over quasi-hereditary algebras.
\newblock {\em Tsukuba Journal of Mathematics}, 25(1):1--11, 2001.

\end{thebibliography}
\end{document}